\DeclareMathAlphabet{\mathbf}{OT1}{cmr}{bx}{it}
\newcommand{\vb}{{\mathbf b}}
\newcommand{\ve}{{\mathbf e}}
\newcommand{\vf}{{\mathbf f}}
\newcommand{\vv}{{\mathbf v}}
\newcommand{\vw}{{\mathbf w}}
\newcommand{\vx}{{\mathbf x}}
\newcommand{\vy}{{\mathbf y}}
\newcommand{\vz}{{\mathbf z}}
\newcommand{\vnull}{\boldsymbol{0}}
\newcommand{\spK}{{\cal K}}
\newcommand{\lmin}{\lambda_{\min}}
\newcommand{\lmax}{\lambda_{\max}}
\newcommand{\dmu}{\d\mu(t)}
\renewcommand{\d}{\,\mathrm{d}}
\newcommand{\R}{\mathbb{R}}
\newcommand{\Rm}{\mathbb{R}^m}
\newcommand{\Rmm}{\mathbb{R}^{m \times m}}
\newcommand{\C}{\mathbb{C}}
\newcommand{\Cn}{\mathbb{C}^n}
\newcommand{\Cm}{\mathbb{C}^m}
\newcommand{\Cnn}{\mathbb{C}^{n \times n}}
\newcommand{\vfmopt}{\vf_m^{\textnormal{opt}}}
\DeclareMathOperator{\Span}{span}
\DeclareMathOperator{\spec}{spec}
\DeclareMathOperator{\diag}{diag}
\newtheorem{remarksimple}[theorem]{Remark}
\let\oldremarksimple\remarksimple
\renewcommand{\remarksimple}{\oldremarksimple\normalfont}
\newenvironment{remark}{\begin{remarksimple}}{\hfill$\diamond$\end{remarksimple}}
\newtheorem{experimentsimple}[theorem]{Experiment}
\let\oldexperimentsimple\experimentsimple
\renewcommand{\experimentsimple}{\oldexperimentsimple\normalfont}
\newtheorem{examplesimple}[theorem]{Example}
\let\oldexamplesimple\examplesimple
\renewcommand{\examplesimple}{\oldexamplesimple\normalfont}
\newenvironment{example}{\begin{examplesimple}}{\hfill$\diamond$\end{examplesimple}}
\tikzset{
  nomorepostactions/.code={\let\tikz@postactions=\pgfutil@empty},
  mymark/.style 2 args={decoration={markings,
    mark= between positions 0 and 1 step (1/9)*\pgfdecoratedpathlength with{%
        \tikzset{#2,every mark}\tikz@options
        \pgfuseplotmark{#1}%
      },  
    },
    postaction={decorate},
    /pgfplots/legend image post style={
        mark=#1,mark options={#2},every path/.append style={nomorepostactions}
    },
  },
}
\pgfplotsset{compat=1.17}
\Crefname{remarksimple}{Remark}{Remarks}
\title{Near instance optimality of the Lanczos method for Stieltjes and related matrix functions}
\author{Marcel Schweitzer\thanks{School of Mathematics and Natural Sciences, Bergische Universit\"at Wuppertal, 42097 Wuppertal, Germany, \email{marcel@uni-wuppertal.de}.}}
\begin{document}
\maketitle

\pagestyle{myheadings} \thispagestyle{plain}
\markboth{M. SCHWEITZER}{NEAR OPTIMALITY OF LANCZOS FOR MATRIX FUNCTIONS}

\begin{abstract}
Polynomial Krylov subspace methods are among the most widely used methods for approximating $f(A)\vb$, the action of a matrix function on a vector, in particular when $A$ is large and sparse. When $A$ is Hermitian positive definite, the Lanczos method is the standard choice of Krylov method, and despite being very simplistic in nature, it often outperforms other, more sophisticated methods. In fact, one often observes that the error of the Lanczos method behaves almost exactly as the error of the best possible approximation from the Krylov space (which is in general not efficiently computable). However, theoretical guarantees for the deviation of the Lanczos error from the optimal error are mostly lacking so far (except for linear systems and a few other special cases). We prove a rigorous bound for this deviation when $f$ belongs to the important class of Stieltjes functions (which, e.g., includes inverse fractional powers as special cases) and a related class (which contains, e.g., the square root and the shifted logarithm), thus providing a \emph{near instance optimality} guarantee. While the constants in our bounds are likely not optimal, they greatly improve over the few results that are available in the literature and resemble the actual behavior much better.
\end{abstract}

\begin{keywords}
Krylov subspace methods, Lanczos method, matrix functions, Stieltjes functions
\end{keywords}

\begin{AMS}
65F60, % Numerical computation of matrix exponential and similar matrix functions
65F50, % Computational methods for sparse matrices
65Q25 % Analysis of algorithms and problem complexity
\end{AMS}

\section{Introduction}\label{sec:intro}
Approximating the action of a matrix function $f(A)\vb$, where $A\in\Cnn$ is a Hermitian matrix, $f$ is defined on the spectrum of $A$ and $\vb \in \Cn$ is a vector, plays an important role in many areas of applied mathematics, scientific computing and data science, including the solution of (fractional) differential equations~\cite{GarrappaPopolizio2018,HochbruckOstermann2010}, the analysis of complex networks~\cite{BenziBoito2020,EstradaRodriguezVelaszquez2005}, Gaussian process regression~\cite{pleiss2020fast,Stoll2020} and theoretical particle physics~\cite{VanDenEshofFrommerLippertSchillingVanDerVorst2002,Neuberger1998}, among many others. 

In these applications, $A$ is typically huge and sparse (or structured in some other way), such that matrix-vector products with it can efficiently be computed, while most other operations (like computing matrix factorizations) are infeasible due to high computational cost and storage demands. In this setting, it is infeasible to first compute the matrix function $f(A)$ and then multiply it to $\vb$. Instead, one aims to directly approximate the solution vector $f(A)\vb$ by means of an iterative method. The by far most popular choice for this task are (polynomial) Krylov subspace methods~\cite{DruskinKnizhnerman1989,Saad1992} based on the Arnoldi process~\cite{Arnoldi1951}. When $A$ is Hermitian, the Arnoldi process simplifies to the short-recurrence Lanczos method~\cite{Lanczos1950}, which is the focus of this work. 

The Lanczos algorithm is given in~\cref{alg:lanczos}. Note that for ease of notation, we assume throughout the paper---without loss of generality---that $\|\vb\| = 1$, where $\|\cdot\|$ denotes the Euclidean norm. The Lanczos method constructs an orthonormal basis $\vv_1,\dots,\vv_m$ of the Krylov subspace
\[
\spK_m(A,\vb) := \Span\{\vb,A\vb,\dots,A^{m-1}\vb\} = \{p_{m-1}(A)\vb : p_{m-1} \in \Pi_{m-1}\},
\]
where $\Pi_{m-1}$ denotes the space of all polynomials of degree at most $m-1$, by exploiting that the basis vectors fulfill a three term recurrence $\beta_{j+1}\vv_{j+1} = A\vv_j - \alpha_j\vv_j - \beta_{j-1}\vv_{j-1}$. Collecting the recurrence coefficients in a tridiagonal matrix
\[
T_m = \begin{bmatrix}
    \alpha_1 & \beta_2  &             &              &          \\
    \beta_2  & \alpha_2 & \beta_3     &              &          \\
             & \ddots   & \ddots      & \ddots       &          \\
             &          & \beta_{m-1} & \alpha_{m-1} & \beta_m  \\
             &          &             & \beta_m      & \alpha_m
\end{bmatrix} \in \Rmm
\]
and the basis vectors in $V_m = [\vv_1 \mid \dots | \vv_m] \in \C^{n \times m}$, we have the Lanczos relation
\begin{equation}\label{eq:lanczos_relation}
AV_m = V_mT_m + \beta_{m+1} \vv_{m+1}(\ve_m^{(m)})^\ast,
\end{equation}
where $\ve_i^{(m)} \in \Rm$ denotes the $i$th canonical unit vector in $\Rm$ and $(\ \cdot\ )^\ast$ denotes the conjugate transpose of a vector (or a matrix). An immediate consequence of~\eqref{eq:lanczos_relation} is that 
\[
T_m = V_m^\ast A V_m.
\]
Given the quantities in~\eqref{eq:lanczos_relation}, the $m$th \emph{Lanczos approximation} for $f(A)\vb$ is given by
\begin{equation}\label{eq:lanczos_approximation}
\vf_m := V_m f(V_m^\ast A V_m)V_m^\ast \vb = V_m f(T_m) \ve_1^{(m)}.
\end{equation}
A remarkable property of the Lanczos approximation is that (in exact arithmetic) it is guaranteed to yield the exact vector $f(A)\vb$ after a finite number of iterations: Denoting by $M$ the \emph{invariance index} of the Krylov subspace (i.e., the smallest $M$ for which $\spK_{m+1}(A,\vb) = \spK_m(A,\vb)$ for all $m \geq M$), it is well known that $\vf_M = f(A)\vb$~\cite[Theorem~3.6]{Saad1992}. Clearly, as $\spK_m(A,\vb)$ is a subspace of $\Cn$, we have $M \leq n$, so that $f(A)\vb$ is found after at most $n$ iterations.

\begin{algorithm}[t]
\begin{algorithmic}[1]
\State $\vv_0 \leftarrow \vnull^{(n)}$
\State $\vv_1 \leftarrow \vb$
\State $\beta_1 \leftarrow 0$
\For{$j = 1,\dots,m$}
	\State $\vw_j \leftarrow  A\vv_j - \beta_j\vv_{j-1}$
	\State $\alpha_j \leftarrow \vv_j^\ast\vw_j$
	\State $\vw_j \leftarrow \vw_j - \alpha_j\vv_j$
	\State $\beta_{j+1} \leftarrow \|\vw_j\|$ \label{line:beta}
	\State $\vv_{j+1} \leftarrow (1/\beta_{j+1}) \vw_j$
\EndFor
\end{algorithmic}
\caption{Lanczos method for constructing an ONB of $\spK_m(A,\vb)$}
    \label{alg:lanczos}
\end{algorithm}

The invariance of $\spK_M(A,\vb)$ is associated with $\beta_{M+1}$ being zero in line~8 of~\Cref{alg:lanczos}. As it indicates that the exact solution is found, this event is also referred to as a \emph{lucky breakdown}.

A lot less is known about the approximation quality of $\vf_m$ for $m < M$. A famous result, sometimes called the \emph{near-optimality} or \emph{quasi-optimality} property of Lanczos, states that
\begin{equation}\label{eq:near_fov_optimality}
\|f(A)\vb-\vf_m\| \leq 2 \min_{p \in \Pi_{m-1}}\ \max_{z \in [\lmin,\lmax]} |f(z) - p(z)|,
\end{equation}
where $\lmin$ and $\lmax$ denote the smallest and largest eigenvalue of $A$, respectively; see, e.g.,~\cite{BeckermannReichel2009,Saad1992}. This bound relates the error of $\vf_m$ to best polynomial approximation on $[\lmin,\lmax]$, the field of values (FOV) of $A$. To distinguish it from other types of near-optimality, the authors of~\cite{AmselChenGreenbaumMuscoMusco2023} propose the more precise term \emph{``near FOV optimality''} for~\eqref{eq:near_fov_optimality}. While~\eqref{eq:near_fov_optimality} can be used to derive a priori bounds on the Lanczos error (see, e.g.,~\cite{BeckermannReichel2009}) by exploiting results from polynomial approximation theory, these bounds need not be descriptive of the actual behavior of Lanczos for a specific problem instance $(f,A,\vb)$. Clearly, the right hand side of \eqref{eq:near_fov_optimality} is the same for any $A$ with spectral interval $[\lmin,\lmax]$ (irrespective of the distribution of the eigenvalues inside this interval) and for any $\vb$ (irrespective of the contribution of the individual eigenvectors of $A$ to $\vb$), and in that sense it gives a ``worst case'' bound, as it needs to be valid for \emph{any} $A$ with field of values $[\lmin,\lmax]$ and \emph{any} vector $\vb$.

In this work, we are therefore interested in a stronger optimality concept, which is dubbed \emph{``near instance optimality''} in~\cite{AmselChenGreenbaumMuscoMusco2023}. We want to find $1 \leq C < \infty$ such that
\begin{equation}\label{eq:near_instance_optimality}
\|f(A)\vb-\vf_m\| \leq C\min_{\vx \in \spK_m(A,\vb)} \|f(A)\vb-\vx\| = C\min_{p \in \Pi_{m-1}} \|f(A)\vb-p(A)\vb\|.
\end{equation}
Near instance optimality (or the slightly weaker concept of near spectrum optimality; see \Cref{sec:existing_results}) is very important for theoretically understanding the behavior of the Lanczos method, as it can, e.g., form the basis of superlinear convergence results~\cite{BeckermannKuijlaars2001,BeckermannKuijlaars2002}.

Our main result, \Cref{thm:near_instance_optimality_stieltjes}, proves a near instance optimality guarantee of the form~\eqref{eq:near_instance_optimality}---and gives an explicit expression for $C$---for the case that $f$ is a Stieltjes (or Markov) function, i.e.,
\begin{equation}\label{eq:stieltjes_function}
f(z) = \int_0^\infty \frac{1}{z+t} \dmu,
\end{equation}
where $\mu: [0, \infty) \longrightarrow \R$ is monotonically increasing and such that $\int_0^\infty \frac{1}{1+t} \dmu < \infty$. This class of functions, e.g., contains inverse fractional powers as important special cases~\cite{Berg2007} and is frequently studied in numerical analysis, as the integral representation~\eqref{eq:stieltjes_function} allows to transfer results for shifted inverses to general matrix functions, which can be beneficial both from a theoretical and an algorithmic point of view; see, e.g.,~\cite{BenziSimoncini2017,FrommerGuettelSchweitzer2014a,FrommerGuettelSchweitzer2014b,FrommerSchweitzer2015,GuettelKnizhnerman2013,GuettelSchweitzer2021,MasseiRobol2020,Schweitzer2016thesis}. Important properties of Stieltjes functions as well as further examples of functions belonging to this class are given in~\Cref{appendix:stieltjes_functions}.

The remainder of this paper is organized as follows. In \Cref{sec:existing_results}, we review the few known near instance optimality results that are available in the literature so far. In \Cref{sec:near_optimality_stieltjes}, we present our main near instance optimality result together with several technical lemmas required for its proof. \Cref{sec:related_classes} discusses the extension of our main result to related function classes, in particular to functions of the form $f(z) = zg(z)$, where $g$ is a Stieltjes function. We illustrate our results by some examples in \Cref{sec:examples} and compare them to results from the literature. Concluding remarks are given in \Cref{sec:conclusion}.

Throughout the paper, we assume exact arithmetic.

\section{Existing near instance optimality results for $f(A)\vb$}\label{sec:existing_results}

Near instance optimality guarantees for the Lanczos approximation only exist for a quite limited number of special cases. Certainly the most famous such result is concerned with the special case $f(z) = z^{-1}$, which means that $f(A)\vb$ corresponds to the solution of the linear system $A\vx = \vb$. In this case, when $A$ is Hermitian positive definite, $\vf_m$ corresponds to the conjugate gradient approximation for $\vx$~\cite{HestenesStiefel1952}, which is known to be optimal in the $A$-norm $\|\vv\|_A = \sqrt{\vv^\ast A\vv}$, i.e.,
\begin{equation}\label{eq:instance_optimality_cg}
\|f(A)\vb-\vf_m\|_A = \min_{\vx \in \spK_m(A,\vb)} \|A^{-1}\vb - \vx\|_A.
\end{equation}
Thus, if one replaces the Euclidean norm by the $A$-norm in~\eqref{eq:near_instance_optimality}, the inequality holds with $C=1$ (and is therefore an equality). I.e., the conjugate gradient method is \emph{instance optimal} with respect to the $A$-norm. Of course, this also directly implies a near instance optimality guarantee in the Euclidean norm,
\begin{equation}\label{eq:near_instance_optimality_cg_euclidean}
\|f(A)\vb-\vf_m\| \leq \sqrt{\kappa(A)} \min_{\vx \in \spK_m(A,\vb)} \|A^{-1}\vb - \vx\|,
\end{equation}
where $\kappa(A) = \frac{\lmax}{\lmin}$ denotes the spectral condition number of $A$. For the case of non-Hermitian $A$, near optimality of the full orthogonalization method (FOM) is studied in~\cite{chen2024near}.

For $f$ different from the inverse, only very few results exist, and these give much weaker guarantees than~\eqref{eq:instance_optimality_cg}--\eqref{eq:near_instance_optimality_cg_euclidean}. Recent work in this direction has been done in~\cite{AmselChenGreenbaumMuscoMusco2023}, where are slightly looser concept of near instance optimality is used. In particular, in the minimization on the right hand side, $\min_{p \in \Pi_{m-1}} \|f(A)\vb-p(A)\vb\|$ is replaced by $\min_{p \in \Pi_{cm-1}} \|f(A)\vb-p(A)\vb\|$, for some $0 < c \leq 1$. If $c<1$, this means that the error of the Lanczos approximation is compared to the error of an optimal polynomial approximation of a \emph{lower} degree.

The first main result of~\cite{AmselChenGreenbaumMuscoMusco2023} is concerned with rational functions $f(z) = \frac{q(z)}{r(z)}$, where $q \in \Pi_{k}, r \in \Pi_\ell$. Denoting the zeros of $r$ by $z_i, i = 1,\dots,\ell$, and assuming that $m \geq \max\{k,\ell-1\}$,~\cite[Theorem~4]{AmselChenGreenbaumMuscoMusco2023} states that
\begin{equation}\label{eq:bound_amsel_etal}
\|f(A)\vb-\vf_m\| \leq \ell \cdot \left(\prod_{i=1}^\ell \kappa(A-z_iI)\right) \min_{\vx \in \spK_{m-\ell+1}(A,\vb)}\|f(A)\vb-\vx\|,
\end{equation}
i.e., the near optimality guarantee holds with $C= \ell \cdot \left(\prod_{i=1}^\ell \kappa(A-z_iI)\right)$ and $c = 1 - \frac{\ell-1}{m}$. If $A$ is Hermitian positive definite and all poles $z_i$ lie on the negative real axis, the bound can be simplified to
\[
\|f(A)\vb-\vf_m\| \leq \ell \cdot \kappa(A)^\ell \min_{\vx \in \spK_{m-\ell+1}(A,\vb)}\|f(A)\vb-\vx\|.
\]
In~\cite[Section~2.2]{AmselChenGreenbaumMuscoMusco2023}, implications for more general functions, which are well approximated by rational functions, are discussed. One shortcoming of~\eqref{eq:bound_amsel_etal} in this context is the exponential growth of the constant $C$ with respect to the degree $\ell$ of the denominator polynomial. In particular, the result can thus not straightforwardly be extended to general functions by a limiting argument, as $C \rightarrow \infty$ for growing $\ell$.

The second main result of~\cite{AmselChenGreenbaumMuscoMusco2023} concerns the square root $f(z) = \sqrt{z}$ and inverse square root $f(z) = \frac{1}{\sqrt{z}}$, two functions which are also covered by the analysis in the present paper; cf.~\Cref{sec:near_optimality_stieltjes,sec:related_classes}. It is not a near instance optimality guarantee, but a \emph{``near spectrum optimality''} guarantee, i.e., a bound similar to~\eqref{eq:near_fov_optimality}, where the interval $[\lmin,\lmax]$ on the right hand side is replaced by $\spec(A)$, the discrete set of eigenvalues of $A$. Specifically,~\cite[Theorem~6~\&~7]{AmselChenGreenbaumMuscoMusco2023} state that
\begin{equation}\label{eq:near_spectrum_optimality_invsqrt}
    \|A^{-1/2}\vb - \vf_m\| \leq \frac{3\kappa(A)}{\sqrt{\pi m}} \min_{p\in\Pi_{m/2-1}}\left(\max_{z \in \spec(A) } \left|\frac{1}{\sqrt{z}}-p(z)\right|\right)
\end{equation}
and
\begin{equation}\label{eq:near_spectrum_optimality_sqrt}
    \|A^{1/2}\vb - \vf_m\| \leq \frac{3\kappa(A)^2}{m^{3/2}} \min_{p\in\Pi_{m/2}}\left(\max_{z \in \spec(A) \cup \{0\}} \left|\sqrt{z}-p(z)\right|\right).
\end{equation}
While~\eqref{eq:near_spectrum_optimality_invsqrt}--\eqref{eq:near_spectrum_optimality_sqrt} involve a much smaller constant than the bound~\eqref{eq:near_instance_optimality} for rational functions, their main shortcoming is that $c = \frac{1}{2}$, i.e., the polynomial degree is halved. This typically means that the bound does not accurately reflect the actual convergence slope of the Lanczos approximation; cf.~\cite[Figure~7]{AmselChenGreenbaumMuscoMusco2023} as well as \Cref{sec:examples} below.

\begin{remark}\label{rem:spectrum_instance}
Concerning the concepts of near spectrum and near instance optimality, it is worth mentioning that the latter is the stronger concept, i.e., near instance optimality implies near spectrum optimality. Clearly, 
\[
\|f(A)\vb-p(A)\vb\| \leq \|f(A)-p(A)\| = \max_{\lambda \in \spec(A)} |f(\lambda)-p(\lambda)|
\]
so that~\eqref{eq:near_instance_optimality} implies
\[
\|f(A)\vb-\vf_m\| \leq C\min_{p \in \Pi_{m-1}} \|f(A)\vb-p(A)\vb\| \leq C\min_{p \in \Pi_{m-1}} \max_{\lambda \in \spec(A)} |f(\lambda)-p(\lambda)|.
\]
Interestingly, under certain assumptions on $\vb$, the converse is also true, i.e., near spectrum optimality implies near instance optimality. One situation in which this is the case is when $\vb$ has independent and identically distributed Gaussian entries; see~\cite[Appendix C.1]{AmselChenGreenbaumMuscoMusco2023}.
\end{remark}

Another near instance optimality result from the literature is concerned with the matrix exponential: In~\cite{DruskinGreenbaumKnizhnerman1998}, it is shown that
\begin{equation}\label{eq:near_instance_optimality_exponential}
\|\exp(-tA)\vb-\vf_m\| \leq 3 \|A\|^2 t^2 \max_{0 \leq s \leq t} \left(\min_{p \in \Pi_{m-3}} \|\exp(-sA)\vb - p(A)\vb\|\right).
\end{equation}
Note that~\eqref{eq:near_instance_optimality_exponential} is not exactly of the form~\eqref{eq:near_instance_optimality} due to the maximum over $s$ on the right hand side; it is very similar in spirit nonetheless.

Several Krylov methods for $f(A)\vb$ have been proposed as alternatives to the Lanczos method~\cite{ChenGreenbaumMuscoMusco2024,chen2023optimal,VanDenEshofFrommerLippertSchillingVanDerVorst2002,pleiss2020fast} which satisfy certain optimality guarantees (for restricted function classes and with respect to specific norms). Interestingly, they are typically outperformed in practice by the plain Lanczos method. Our analysis in \Cref{sec:near_optimality_stieltjes} further motivates why this observation is somewhat expected, proving that Lanczos does indeed satisfy a near optimality guarantee, at least for a rather large class of relevant functions $f$.

\section{Near instance optimality for Stieltjes functions}\label{sec:near_optimality_stieltjes}

Our main result is given in the following theorem.

\begin{theorem}\label{thm:near_instance_optimality_stieltjes}
Let $A \in \Cnn$ be Hermitian positive definite with smallest and largest eigenvalue $\lmin$ and $\lmax$, respectively, let $\vb \in \Cn$ with $\|\vb\| = 1$ and let $f$ be a Stieltjes function. Then the Lanczos approximation $\vf_m$ satisfies
\begin{align}
\|f(A)\vb-\vf_m\| &\leq \left(1+\beta_{m+1}\frac{\lmax}{\lmin^2}\right) \min_{p \in \Pi_{m-1}}\|f(A)\vb-p(A)\vb\| \label{eq:mainresult1}\\
                  &\leq \left(1+\kappa(A)^2\right) \min_{p \in \Pi_{m-1}}\|f(A)\vb-p(A)\vb\|. \label{eq:mainresult2}
\end{align}
\end{theorem}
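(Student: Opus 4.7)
The plan is to exploit the Stieltjes integral representation~\eqref{eq:stieltjes_function} to reduce the analysis of $f(A)\vb - \vf_m$ to a one-parameter family of shifted linear systems $(A+tI)\vx = \vb$, for which the Lanczos residual admits a closed form. Pushing the integral through the linear Lanczos approximation and invoking the shifted residual identity
\[
(A+tI)^{-1}\vb - V_m(T_m+tI)^{-1}\ve_1^{(m)} = -\beta_{m+1}\phi_m(t)(A+tI)^{-1}\vv_{m+1},
\]
where $\phi_m(t) := (\ve_m^{(m)})^\ast(T_m+tI)^{-1}\ve_1^{(m)}$, which follows directly from the Lanczos relation~\eqref{eq:lanczos_relation}, I obtain the compact error formula
\[
f(A)\vb - \vf_m = -\beta_{m+1}\int_0^\infty \phi_m(t)(A+tI)^{-1}\vv_{m+1}\dmu.
\]

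Next I would introduce the orthogonal projector $P_m := V_m V_m^\ast$ onto $\spK_m(A,\vb)$ and split the integrand via $(A+tI)^{-1}\vv_{m+1} = (I-P_m)(A+tI)^{-1}\vv_{m+1} + P_m(A+tI)^{-1}\vv_{m+1}$. After integration, the first summand reconstitutes exactly the best-approximation error $f(A)\vb - \vfmopt$, where $\vfmopt := P_m f(A)\vb$ is the Euclidean-optimal approximation from $\spK_m(A,\vb)$, while the second summand is precisely $\vfmopt - \vf_m$. Since these two vectors are orthogonal, the triangle inequality reduces~\eqref{eq:mainresult1} to establishing the single estimate
\[
\|\vfmopt - \vf_m\| \leq \beta_{m+1}\frac{\lmax}{\lmin^2}\|f(A)\vb - \vfmopt\|.
\]

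The crucial structural input for this bound is the orthogonality $P_m\vv_{m+1} = \vnull$, which combined with the resolvent identity $(A+tI)^{-1} = \frac{1}{\lmin+t}[I - (A-\lmin I)(A+tI)^{-1}]$ forces the cancellation
\[
P_m(A+tI)^{-1}\vv_{m+1} = -\frac{1}{\lmin+t}P_m(A-\lmin I)(A+tI)^{-1}\vv_{m+1},
\]
replacing the trivial bound $\|P_m(A+tI)^{-1}\vv_{m+1}\| \leq 1/(\lmin+t)$ by the sharper $(\lmax-\lmin)/[(\lmin+t)(\lmax+t)]$. In addition, the three-term Lanczos recurrence $A\vv_{m+1} = \alpha_{m+1}\vv_{m+1} + \beta_{m+1}\vv_m + \beta_{m+2}\vv_{m+2}$ yields the explicit identity $P_m(A-\lmin I)\vv_{m+1} = \beta_{m+1}\vv_m$, which is where the explicit $\beta_{m+1}$ factor in the bound ultimately originates. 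For the denominator I would exploit the Fourier-type lower bound $\|f(A)\vb - \vfmopt\| \geq |\vv_{m+1}^\ast f(A)\vb|$, valid since $\vfmopt \perp \vv_{m+1}$ so that $\vv_{m+1}^\ast f(A)\vb = \vv_{m+1}^\ast(f(A)\vb - \vfmopt)$ and Cauchy--Schwarz applies, and rewrite $\vv_{m+1}^\ast f(A)\vb$ via the identity $\vv_{m+1}^\ast(A+tI)^{-1}\vb = -\beta_{m+1}\phi_m(t)\vv_{m+1}^\ast(A+tI)^{-1}\vv_{m+1}$ so that both sides of the target inequality are expressed as common Stieltjes-type integrals involving $\phi_m$, with the $\beta_{m+1}$ factors visible on both sides.

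The hard part will be combining these building blocks to extract exactly the constant $\beta_{m+1}\lmax/\lmin^2$ rather than a looser power of $\kappa(A)$: one must carefully track how the pointwise spectral bound above interacts with the weight $\phi_m(t)/(\lmin+t)$ under integration against $\dmu$, and match the resulting upper bound on $\|\vfmopt - \vf_m\|$ against a lower bound on $\|f(A)\vb - \vfmopt\|$ of the correct order in $\lmin$. Once~\eqref{eq:mainresult1} is established, the weaker bound~\eqref{eq:mainresult2} is immediate from the standard Lanczos estimate $\beta_{m+1} \leq \|A\| = \lmax$, which gives $1 + \beta_{m+1}\lmax/\lmin^2 \leq 1 + \lmax^2/\lmin^2 = 1 + \kappa(A)^2$.
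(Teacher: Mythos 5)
Your overall strategy is viable and genuinely different from the paper's: instead of running Lanczos to the invariance index $M$ and applying the Woodbury identity to the rank-two coupling in $T_M$ (which is how the paper obtains its functions $f_1,f_2$ and the tail matrix $S_{M-m}$), you stay at step $m$, use the shifted residual identity to write $f(A)\vb-\vf_m=-\beta_{m+1}\int_0^\infty\phi_m(t)(A+tI)^{-1}\vv_{m+1}\dmu$, and split with the projector $P_m$. These identities are all correct, the splitting does produce $f(A)\vb-\vfmopt$ and $\vfmopt-\vf_m$ as claimed, and the Fourier-coefficient lower bound $\|f(A)\vb-\vfmopt\|\ge|\vv_{m+1}^\ast f(A)\vb|$ is a clean substitute for the paper's lower bound on $|f_2(\lmax)|$. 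One ingredient you use implicitly but must state: to convert $|\vv_{m+1}^\ast f(A)\vb|$ into $\beta_{m+1}\int_0^\infty|\phi_m(t)|\,\vv_{m+1}^\ast(A+tI)^{-1}\vv_{m+1}\dmu$ you need $\phi_m(t)$ to have constant sign on $[0,\infty)$; this holds because $\phi_m(t)=(-1)^{m+1}\prod_{i}\beta_i/\prod_i(\theta_i+t)$ by the tridiagonal structure of $T_m$ (the paper makes exactly this observation for its $\varepsilon(t)$, and it is the step that blocks extension to signed measures).

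The genuine gap is in the numerator: the estimates you actually write down cannot produce the factor $\beta_{m+1}$ in~\eqref{eq:mainresult1}. Your resolvent-identity bound $\|P_m(A+tI)^{-1}\vv_{m+1}\|\le(\lmax-\lmin)/[(\lmin+t)(\lmax+t)]$, combined with your denominator bound, yields $\|\vfmopt-\vf_m\|\le\frac{\lmax-\lmin}{\lmin}\,\|f(A)\vb-\vfmopt\|$, a $\beta_{m+1}$-free constant that implies~\eqref{eq:mainresult2} (indeed the stronger constant $\kappa(A)$) but \emph{not}~\eqref{eq:mainresult1}, since $1+\beta_{m+1}\lmax/\lmin^2$ is the smaller quantity whenever $\beta_{m+1}$ is small (e.g.\ near a lucky breakdown). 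The identity $P_m(A-\lmin I)\vv_{m+1}=\beta_{m+1}\vv_m$, which you name as the source of the $\beta_{m+1}$, cannot be inserted to repair this: in $P_m(A-\lmin I)(A+tI)^{-1}\vv_{m+1}$ the resolvent sits between $(A-\lmin I)$ and $\vv_{m+1}$, and $(A+tI)^{-1}\vv_{m+1}$ is not a multiple of $\vv_{m+1}$. The correct repair is to apply $V_m^\ast$ to $(A+tI)\vw=\vv_{m+1}$ and use the Lanczos relation, which gives the exact formula $V_m^\ast(A+tI)^{-1}\vv_{m+1}=-\beta_{m+1}\bigl(\vv_{m+1}^\ast(A+tI)^{-1}\vv_{m+1}\bigr)(T_m+tI)^{-1}\ve_m^{(m)}$; then $\|P_m(A+tI)^{-1}\vv_{m+1}\|\le\frac{\beta_{m+1}}{\lmin}\,\vv_{m+1}^\ast(A+tI)^{-1}\vv_{m+1}$ pointwise in $t$, the integrals on the two sides of your target inequality match term by term, and you obtain $\|\vfmopt-\vf_m\|\le\frac{\beta_{m+1}}{\lmin}\|f(A)\vb-\vfmopt\|$, which is even slightly stronger than~\eqref{eq:mainresult1}. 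As written, however, the proposal does not establish~\eqref{eq:mainresult1}.
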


In order to prove \Cref{thm:near_instance_optimality_stieltjes}, we require a few auxiliary results that we present next. In the following, we denote by $\vfmopt$ the optimal approximation for $f(A)\vb$ from $K_m(A,\vb)$ with respect to the Euclidean norm, i.e.,
\[
\|f(A)\vb - \vfmopt\| = \min_{\vx \in \spK_m(A,\vb)} \|f(A)\vb - \vx\| = \min_{p \in \Pi_{m-1}} \|f(A)\vb - p(A)\vb\|.
\]
Clearly, $\vfmopt$ corresponds to the orthogonal projection of $f(A)\vb$ onto $\spK_m(A,\vb)$, i.e., 
\begin{equation}\label{eq:vfmopt_projection}
    \vfmopt = V_mV_m^\ast f(A)\vb.
\end{equation}

The following proposition is an easy consequence of the finite termination property of the Lanczos method. %As before, we denote by $M$ the invariance index of $\spK_m(A,\vb)$.

\begin{proposition}\label{prop:block_partitioning}
Let $A$ be Hermitian positive definite, let $M$ be the invariance index of the Krylov subspace corresponding to $A$ and $\vb$ and let $V_M, T_M$ be the orthonormal basis and tridiagonal matrix resulting from $M$ iterations of \Cref{alg:lanczos}. For $m < M$, partition $V_M = [V_m, U_{M-m}]$ with $V_m \in \C^{n \times m}, U_{M-m} \in \C^{n \times (M-m)}$ and denote $f(T_m)\ve_1^{(m)} =: \vy_m$ and $f(T_M)\ve_1^{(M)} =: \begin{bmatrix} \vx_m \\ \vz_{M-m}\end{bmatrix}$ with $\vx_m \in \Cm, \vz_{M-m} \in \C^{M-m}$. Then
\begin{equation}\label{eq:err_lan}
f(A)\vb - \vf_m = V_m(\vx_m-\vy_m) + U_{M-m}\vz_{M-m} 
\end{equation}
and
\begin{align}
f(A)\vb - \vfmopt = U_{M-m}\vz_{M-m}.  \label{eq:err_opt}
\end{align}
\end{proposition}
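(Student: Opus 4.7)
The plan is to exploit the fact that $\spK_M(A,\vb)$ is an invariant subspace of $A$ in order to write $f(A)\vb$ exactly in the basis $V_M$, and then to split that representation block-by-block according to the partition $V_M=[V_m,\,U_{M-m}]$. Both identities~\eqref{eq:err_lan} and~\eqref{eq:err_opt} should then drop out by subtraction.

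First I would use that $\beta_{M+1}=0$, which reduces the Lanczos relation~\eqref{eq:lanczos_relation} at step $M$ to $AV_M = V_M T_M$, so that $\range(V_M) = \spK_M(A,\vb)$ is $A$-invariant. Combined with $V_M^\ast\vb = \ve_1^{(M)}$, the finite-termination property cited after~\eqref{eq:lanczos_approximation} yields
\[
f(A)\vb \;=\; V_M f(T_M)\ve_1^{(M)} \;=\; V_m\vx_m + U_{M-m}\vz_{M-m}.
\]
Next, since the first $m$ iterations of \Cref{alg:lanczos} are unaffected by any later ones, running the algorithm for $M$ steps reproduces the same basis vectors $V_m$ and the same leading $m\times m$ principal submatrix $T_m$ of $T_M$ that one would obtain after $m$ steps in isolation; hence $\vf_m = V_m f(T_m)\ve_1^{(m)} = V_m\vy_m$. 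Subtracting this from the displayed identity produces~\eqref{eq:err_lan} at once. For~\eqref{eq:err_opt} I would invoke the projection characterization~\eqref{eq:vfmopt_projection}: the orthonormality of the columns of $V_M = [V_m,\,U_{M-m}]$ forces $V_m^\ast V_m = I_m$ and $V_m^\ast U_{M-m} = 0$, so multiplying the displayed expansion of $f(A)\vb$ on the left by $V_m^\ast$ gives $V_m^\ast f(A)\vb = \vx_m$ and therefore $\vfmopt = V_m\vx_m$; subtracting from $f(A)\vb$ cancels this term and leaves exactly $U_{M-m}\vz_{M-m}$.

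There is no real obstacle here: the proposition is essentially a bookkeeping statement that isolates the ``unreachable'' part of $f(A)\vb$ in the orthogonal complement of $\spK_m(A,\vb)$ within $\spK_M(A,\vb)$. The only subtlety worth flagging is that matrix functions do not commute with taking principal submatrices, so $\vx_m \neq \vy_m$ in general; it is precisely this mismatch between $f(T_M)$ restricted to its leading block and $f(T_m)$ that produces the nontrivial $V_m(\vx_m-\vy_m)$ contribution in~\eqref{eq:err_lan}, and bounding this term will be the real work in the subsequent proof of~\Cref{thm:near_instance_optimality_stieltjes}.
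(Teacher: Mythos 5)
Your argument is correct and follows essentially the same route as the paper: both use the finite-termination identity $f(A)\vb = V_M f(T_M)\ve_1^{(M)}$, split it along the partition $V_M = [V_m,\,U_{M-m}]$, subtract $\vf_m = V_m\vy_m$ for the first identity, and apply the orthogonal-projection characterization $\vfmopt = V_mV_m^\ast f(A)\vb$ together with $V_m^\ast V_m = I_m$, $V_m^\ast U_{M-m} = 0$ for the second. The extra details you supply (that $\beta_{M+1}=0$ makes $\range(V_M)$ invariant, and that the first $m$ Lanczos steps are unaffected by later ones) are correct and only make the bookkeeping more explicit.
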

\begin{proof}
Due to the finite termination property of the Lanczos method, we have $f(A)\vb = V_Mf(T_M)\ve_1^{(M)}$. We can therefore write
\begin{align*}
f(A)\vb - \vf_m &= V_Mf(T_M)\ve_1^{(M)} - V_mf(T_m)\ve_1^{(m)}\\
&= [V_m, U_{M-m}]\begin{bmatrix} \vx_m \\ \vz_{M-m}\end{bmatrix} - V_m\vy_m \\
&= V_m(\vx_m-\vy_m) + U_{M-m}\vz_{M-m}.
\end{align*}
Similarly, we find
\begin{align*}
f(A)\vb - \vfmopt &= V_Mf(T_M)\ve_1^{(M)} - V_mV_m^\ast V_Mf(T_M)\ve_1^{(M)}\\
&= [V_m, U_{M-m}]\begin{bmatrix} \vx_m \\ \vz_{M-m}\end{bmatrix} - V_mV_m^\ast[V_m, U_{M-m}]\begin{bmatrix} \vx_m \\ \vz_{M-m}\end{bmatrix} \nonumber\\
&= U_{M-m}\vz_{M-m},
\end{align*}
where the last equality follows because $V_m^\ast V_m = I_m$ and $V_m^\ast U_{M-m} = 0$.
\end{proof}

In the following, we partition $T_M$ in accordance with $V_M = [V_m, U_{M-m}]$, i.e.,
\begin{equation}\label{eq:TM}
T_M = \begin{bmatrix}
T_m & \beta_{m+1} \ve_m^{(m)}(\ve_1^{(M-m)})^\ast \\
\beta_{m+1} \ve_{M-m}^{(M-m)}(\ve_1^{(m)})^\ast & S_{M-m}
\end{bmatrix}.
\end{equation}

By considering $T_M$ in~\eqref{eq:TM} as a rank-two update of a block diagonal matrix and employing the Woodbury matrix identity, we can derive explicit formulas for the quantities $\vx_m-\vy_m$ and $\vz_{M-m}$ occurring in~\eqref{eq:err_lan}--\eqref{eq:err_opt}.

\begin{lemma}\label{lem:lowrank_update}
Let the assumptions of \Cref{prop:block_partitioning} hold, let $T_M$ be partitioned as in~\eqref{eq:TM} and let $f$ be a Stieltjes function of the form~\eqref{eq:stieltjes_function}. Define the scalar functions
\begin{align}\label{eq:auxiliary_functions}
\begin{split}
\gamma(t)      &= (\ve_m^{(m)})^\ast (T_m+tI)^{-1}\ve_m^{(m)},\\
\delta(t)      &= (\ve_1^{(M-m)})^\ast (S_{M-m}+tI)^{-1}\ve_1^{(M-m)},\\
\varepsilon(t) &= (\ve_m^{(m)})^\ast(T_m+tI)^{-1}\ve_1^{(m)}
\end{split}
\end{align}
and the matrix-valued function
\begin{equation}\label{eq:Xt}
X(t) = \begin{bmatrix} 
\gamma(t) & \frac{1}{\beta_{m+1}} \\ 
\frac{1}{\beta_{m+1}} & \delta(t)
\end{bmatrix} \in \C^{2 \times 2}.
\end{equation}
Then 
\[
\vx_m-\vy_m = f_1(T_m)\ve_m^{(m)}
\]
and
\[
\vz_{M-m} = f_2(S_{M-m})\ve_{M-m}^{(M-m)},
\]
where
\begin{equation*}
f_1(z) = \int_0^\infty -\frac{\delta(t)\varepsilon(t)}{\det(X(t))} \frac{1}{z+t} \dmu \ \text{ and } \
f_2(z) = \int_0^\infty \frac{\varepsilon(t)}{\beta_{m+1}\det(X(t))}\frac{1}{z+t} \dmu.
\end{equation*}
\end{lemma}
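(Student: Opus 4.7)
The plan is to exploit the Stieltjes representation~\eqref{eq:stieltjes_function} to reduce everything to the shifted resolvent $(T_M+tI)^{-1}\ve_1^{(M)}$ for each fixed $t\geq 0$, and then to treat $T_M$ as a rank-two perturbation of the block-diagonal matrix
$$D(t) := \begin{bmatrix} T_m + tI & 0 \\ 0 & S_{M-m}+tI \end{bmatrix}$$
so that the Sherman--Morrison--Woodbury identity does the heavy lifting. Concretely, I would write $T_M+tI = D(t)+UCU^\ast$, where the $M\times 2$ matrix $U$ has as its two columns the unit vectors locating the $\beta_{m+1}$ couplings in~\eqref{eq:TM} and $C=\beta_{m+1}\bigl[\begin{smallmatrix}0 & 1\\ 1 & 0\end{smallmatrix}\bigr]$. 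Woodbury then yields
$$(T_M+tI)^{-1} = D(t)^{-1} - D(t)^{-1}U\bigl(C^{-1}+U^\ast D(t)^{-1}U\bigr)^{-1}U^\ast D(t)^{-1},$$
and a direct check using the definitions~\eqref{eq:auxiliary_functions} shows that the inner $2\times 2$ matrix is exactly the $X(t)$ from~\eqref{eq:Xt}: its diagonal entries are $\gamma(t)$ and $\delta(t)$, while the antidiagonal $1/\beta_{m+1}$ entries come from $C^{-1}$.

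Next I would apply this identity to $\ve_1^{(M)} = [\ve_1^{(m)};\vnull]$. Since $\ve_1^{(M)}$ lies entirely in the first block, $U^\ast D(t)^{-1}\ve_1^{(M)}$ collapses to $[\varepsilon(t);0]^\top$. Inverting $X(t)$ by the adjugate formula and multiplying through by $D(t)^{-1}U$ produces two nonvanishing contributions: a coefficient $-\delta(t)\varepsilon(t)/\det X(t)$ on $(T_m+tI)^{-1}\ve_m^{(m)}$ in the top block, and a coefficient $\varepsilon(t)/(\beta_{m+1}\det X(t))$ on $(S_{M-m}+tI)^{-1}$ applied to the coupling unit vector in the bottom block. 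Combined with the unperturbed top-block term $(T_m+tI)^{-1}\ve_1^{(m)}$ coming from $D(t)^{-1}\ve_1^{(M)}$, this exhibits $(T_M+tI)^{-1}\ve_1^{(M)}$ exactly in the two-block form required by~\cref{prop:block_partitioning}.

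The final step is to integrate against $\dmu$. The unperturbed top-block contribution integrates to $\vy_m = f(T_m)\ve_1^{(m)}$, so by~\cref{prop:block_partitioning} the remainder is
$$\vx_m - \vy_m = \int_0^\infty \left(-\frac{\delta(t)\varepsilon(t)}{\det X(t)}\right)(T_m+tI)^{-1}\ve_m^{(m)}\dmu.$$
Recognizing the scalar weight as the integrand in the definition of $f_1$ and applying the integral form of the matrix functional calculus identifies this with $f_1(T_m)\ve_m^{(m)}$, and the same reasoning converts the bottom-block integral into $f_2(S_{M-m})$ applied to the coupling unit vector. Exchanging integral and functional calculus is legitimate because $T_m$ and $S_{M-m}$ are principal submatrices of the Hermitian positive definite $A$, so their spectra lie in $[\lmin,\lmax]$ and each integrand is uniformly bounded on $t\in[0,\infty)$ by the Stieltjes kernel defining $f$.

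The main obstacle I anticipate is pure bookkeeping: pinning down the correct coupling unit vector in the second block consistently with~\eqref{eq:TM}, and confirming that the scalar weights $-\delta(t)\varepsilon(t)/\det X(t)$ and $\varepsilon(t)/(\beta_{m+1}\det X(t))$ define bona fide Stieltjes integrands against $d\mu(t)$ (so that the functional calculus step on $T_m$ and $S_{M-m}$ is legitimate and $f_1,f_2$ are well defined). Everything else reduces to algebraic manipulations of a $2\times 2$ matrix inverse.
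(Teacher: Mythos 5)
Your proposal is correct and follows essentially the same route as the paper's proof: the identical rank-two splitting of $T_M+tI$ into the block-diagonal part plus the $\beta_{m+1}$ coupling, the Woodbury identity producing exactly the capacitance matrix $X(t)$, the evaluation at $\ve_1^{(M)}$ giving $[\varepsilon(t);0]^\top$, and the final integration against $\d\mu$. The only point to note is that your ``coupling unit vector in the bottom block'' is $\ve_1^{(M-m)}$, which matches the paper's own derivation (the $\ve_{M-m}^{(M-m)}$ in the lemma statement is a typo).
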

\begin{proof}
We mention upfront that the existence of the integrals in the definition of $f_1$ and $f_2$ will be assumed here. We prove in \Cref{lem:f1f2_stieltjes} below that they are indeed guaranteed to exist.

We note that for a Stieltjes function $f$, we have
\[
f(A)\vb = \int_0^\infty (A+tI)^{-1}\vb \dmu,
\]
and begin by focusing on an individual shifted inverse $(A+tI)^{-1}\vb$ for some $t \geq 0$. Analogously to the notation used in~\Cref{prop:block_partitioning}, we denote the coefficient vectors related to the Lanczos approximation of $(A+tI)^{-1}\vb$ by $\vy_m(t), \vx_m(t)$ and $\vz_{M-m}(t)$.

We define the block diagonal matrix
\[
D_M = \begin{bmatrix} T_m & \\ & S_{M-m}\end{bmatrix},
\]
with which we can write
\begin{equation}\label{eq:low_rank_update}
T_M = D_M + WRW^\ast, \text{ where } W = [\ve_m^{(M)}, \ve_{m+1}^{(M)}] \text{ and } R = \begin{bmatrix} 0 & \beta_{m+1} \\ \beta_{m+1} & 0\end{bmatrix}.
\end{equation}
By the Woodbury matrix identity~\cite{Woodbury1950}, we can express the shifted inverse of $T_M$ as
\begin{equation}\label{eq:woodbury}
(T_M+tI)^{-1} = (D_M + tI)^{-1} - (D_M + tI)^{-1}WX(t)^{-1}W^\ast(D_M+tI)^{-1},
\end{equation}
where 
\[
X(t) = (R^{-1} + W(D_M+tI)^{-1}W^\ast).
\]
Note that $X(t)$ is guaranteed to be invertible, because $T_M+tI, D_M+tI$ and $R$ are all invertible. 

By exploiting the block diagonal structure of $D_M+tI$ together with the sparsity pattern of $W$, we find
\begin{equation}\label{eq:DtIW}
(D_M+tI)^{-1}W = \begin{bmatrix} (T_m+tI)^{-1}\ve_m^{(m)} & \vnull^{(m)} \\ \vnull^{(M-m)} & (S_{M-m}+tI)^{-1}\ve_1^{(M-m)} \end{bmatrix},
\end{equation}
where $\vnull^{(m)} \in \Rm$ denotes a vector of all zeros. Multiplying~\eqref{eq:DtIW} by $W^\ast$ from the left and again exploiting the zero pattern, a direct computation shows that $X(t)$ can be written as in~\eqref{eq:Xt}. Therefore, the matrix $X(t)^{-1}$ occurring in~\eqref{eq:woodbury} is given by
\begin{equation}\label{eq:Xtinv}
X(t)^{-1} = \frac{1}{\det(X(t))} \begin{bmatrix} \delta(t) & -\frac{1}{\beta_{m+1}} \\ -\frac{1}{\beta_{m+1}} & \gamma(t) \end{bmatrix} \ \text{with}\ \det(X(t)) = \gamma(t)\delta(t) - \frac{1}{\beta_{m+1}^2}.
\end{equation}
Inserting~\eqref{eq:DtIW} and~\eqref{eq:Xtinv} into~\eqref{eq:woodbury} yields
\begin{equation}\label{eq:update}
(T_M+tI)^{-1} = (D_M+tI)^{-1} - \frac{1}{\det(X(t))} N(t)
\end{equation}
where 
\[
N(t) = \begin{bmatrix}
   N_{11}(t) & N_{12}(t) \\
   N_{21}(t) & N_{22}(t)
\end{bmatrix}
\]
has the blocks
\begin{align*}
N_{11}(t) &:= \delta(t) (T_m+tI)^{-1}\ve_m^{(m))}(\ve_m^{(m)})^\ast(T_m+tI)^{-1}, \\ 
N_{12}(t) &:= -\frac{1}{\beta_{m+1}}(T_m+tI)^{-1}\ve_m^{(m)}(\ve_1^{(M-m))})^\ast(S_{M-m}+tI)^{-1}, \\
N_{21}(t) &:= -\frac{1}{\beta_{m+1}}(S_{M-m}+tI)^{-1}\ve_1^{(M-m)}(\ve_m^{(m))})^\ast(T_m+tI)^{-1}, \\
N_{22}(t) &:= \gamma(t) (S_{M-m}+tI)^{-1}\ve_1^{(M-m)}(\ve_1^{(M-m))})^\ast(S_{M-m}+tI)^{-1}.
\end{align*}
We have
\begin{align*}
(T_M+tI)^{-1}\ve_1^{(M)} - (D_M+tI)^{-1}\ve_1^{(M)} &= \begin{bmatrix} \vx_m(t) \\ \vz_{M-m}(t) \end{bmatrix}- \begin{bmatrix} \vy_m(t) \\ \vnull^{(M-m)} \end{bmatrix}\\
&= \begin{bmatrix} \vx_m(t)-\vy_m(t) \\ \vz_{M-m}(t) \end{bmatrix},
\end{align*}
which by~\eqref{eq:update} implies
\[
\begin{bmatrix} \vx_m(t)-\vy_m(t) \\ \vz_{M-m}(t) \end{bmatrix} = - \frac{1}{\det(X(t))} N(t)\ve_1^{(M)},
\]
i.e.,
\begin{equation}\label{eq:xmtymt}
\vx_m(t)-\vy_m(t) = - \frac{\delta(t)\varepsilon(t)}{\det(X(t))} (T_m+tI)^{-1}\ve_m^{(m)}
\end{equation}
and
\begin{equation}\label{eq:zmt}
\vz_{M-m}(t) = \frac{\varepsilon(t)}{\beta_{m+1}\det(X(t))}(S_{M-m}+tI)^{-1}\ve_1^{(M-m)}
\end{equation}
with $\varepsilon(t)$ defined in~\eqref{eq:auxiliary_functions}. 

The assertion of the lemma follows from~\eqref{eq:xmtymt} and~\eqref{eq:zmt} by noting that
\[
\vx_m = \int_0^\infty \vx_m(t)\dmu, \ \vy_m = \int_0^\infty \vy_m(t)\dmu \ \text{ and } \ \vz_{M-m} = \int_0^\infty \vz_{M-m}(t)\dmu.
\]
\end{proof}

Our next auxiliary result states that the functions$f_1$ and $f_2$ defined in \Cref{lem:lowrank_update} are (scalar multiples of) Stieltjes functions. This is important because it not only guarantees that they are well-defined for any $z \in (0, \infty)$, but by \Cref{prop:stieltjes_monotonic} it also implies that they are monotonically decreasing (in magnitude) on $(0, \infty)$, which is an essential argument in the proof of \Cref{thm:near_instance_optimality_stieltjes}.

\begin{lemma}\label{lem:f1f2_stieltjes}
Let the assumptions of \Cref{lem:lowrank_update} hold. Then $(-1)^{m+1}f_1$ and $(-1)^mf_2$ are Stieltjes functions. In particular, $f_1, f_2$ have constant sign on $(0, \infty)$ and $|f_1|, |f_2|$ are monotonically decreasing on $(0, \infty)$.
\end{lemma}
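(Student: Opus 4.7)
The plan is to establish three sign facts about $\gamma(t),\delta(t),\varepsilon(t)$ and $\det(X(t))$, combine them so that after multiplication by $(-1)^{m+1}$ or $(-1)^m$ the densities of $f_1,f_2$ against $\dmu$ become nonnegative, verify the Stieltjes integrability condition, and then invoke \Cref{prop:stieltjes_monotonic} for the constant-sign and monotonicity conclusions.

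Concretely: $T_m$ and $S_{M-m}$ are principal submatrices of the HPD matrix $T_M=V_M^\ast A V_M$, hence HPD themselves, so $\gamma(t),\delta(t)>0$ for all $t\geq 0$. Since $m<M$, all off-diagonals $\beta_2,\dots,\beta_m$ are strictly positive, so $T_m+tI$ is irreducible symmetric with positive off-diagonals; conjugation by $D=\diag(1,-1,\dots,(-1)^{m-1})$ turns it into a symmetric Stieltjes matrix (a symmetric M-matrix), whose inverse is entrywise positive. Reversing the conjugation shows that $((T_m+tI)^{-1})_{ij}$ carries the checkerboard sign $(-1)^{i+j}$, and in particular $\sign\varepsilon(t)=(-1)^{m+1}$. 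For the sign of $\det(X(t))$, write $T_M+tI=(D_M+tI)+WRW^\ast$ as in~\eqref{eq:low_rank_update}; Sylvester's determinant identity together with $W^\ast(D_M+tI)^{-1}W=\diag(\gamma(t),\delta(t))$ yields
\[
\det(T_M+tI)=\det(D_M+tI)\bigl(1-\beta_{m+1}^2\gamma(t)\delta(t)\bigr).
\]
Since both $T_M+tI$ and $D_M+tI$ are HPD, this forces $\gamma(t)\delta(t)<1/\beta_{m+1}^2$, i.e.\ $\det(X(t))<0$ for all $t\geq 0$.

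Combining the three signs, the density of $f_1$ has sign $(-1)\cdot(+1)\cdot(-1)^{m+1}/(-1)=(-1)^{m+1}$ and that of $f_2$ has sign $(-1)^{m+1}/(-1)=(-1)^m$, so multiplying $f_1$ by $(-1)^{m+1}$ and $f_2$ by $(-1)^m$ produces nonnegative densities against $\dmu$. To check that the resulting measures are valid Stieltjes measures, I bound $\gamma(t),\delta(t),|\varepsilon(t)|\leq 1/(\lmin+t)$ and note that these are nonincreasing in $t$, so $|\det(X(t))|\geq 1/\beta_{m+1}^2-\gamma(0)\delta(0)>0$ uniformly in $t\geq 0$; thus the densities are uniformly bounded, and the assumption $\int_0^\infty(1+t)^{-1}\dmu<\infty$ on $\mu$ transfers to them. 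Consequently $(-1)^{m+1}f_1$ and $(-1)^m f_2$ are Stieltjes functions in the sense of~\eqref{eq:stieltjes_function}, and \Cref{prop:stieltjes_monotonic} delivers strict positivity and monotonic decrease on $(0,\infty)$, which translates back to constant sign of $f_1,f_2$ and monotonic decrease of $|f_1|,|f_2|$. The main obstacle is the careful sign bookkeeping---the checkerboard pattern of $(T_m+tI)^{-1}$, the negativity of $\det(X(t))$ inferred from positive-definiteness, and the parity in the definition of $f_1,f_2$---so that the prefactors $(-1)^{m+1}$ and $(-1)^m$ exactly cancel all three signs.
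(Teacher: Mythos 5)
Your proof is correct and follows the same overall skeleton as the paper's: establish the signs of $\gamma(t),\delta(t),\varepsilon(t)$ and $\det(X(t))$, combine them to see that the densities of $(-1)^{m+1}f_1$ and $(-1)^m f_2$ against $\d\mu$ are nonnegative and uniformly bounded, and then conclude via \Cref{prop:stieltjes_modified_measure} and \Cref{prop:stieltjes_monotonic}. Two of your sub-arguments differ genuinely from the paper's. For the sign of $\varepsilon(t)$, the paper uses the explicit closed form for the corner entry of the inverse of an irreducible tridiagonal matrix, $\varepsilon(t) = (-1)^{m+1}\prod\beta_i/\prod(\theta_i+t)$, which gives the sign and the monotone decay of $|\varepsilon(t)|$ in one stroke; you instead conjugate $T_m+tI$ by $\diag(1,-1,\dots)$ to obtain an irreducible symmetric M-matrix and read off the checkerboard sign of the inverse. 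Both are standard and valid (your route needs the irreducibility $\beta_2,\dots,\beta_m>0$, which you correctly justify from $m<M$). For $\det(X(t))<0$, the paper argues by continuity: $X(t)$ is invertible for all $t\geq 0$ and $\det(X(t))\to -1/\beta_{m+1}^2$ as $t\to\infty$, so the determinant can never change sign. Your argument via the determinant lemma, $\det(T_M+tI)=\det(D_M+tI)\,\det(R)\,\det(X(t))$ with $\det(R)=-\beta_{m+1}^2<0$ and both full determinants positive by positive definiteness, is more direct and arguably cleaner, and it yields the same quantitative bound $|\det(X(t))|\geq 1/\beta_{m+1}^2-\gamma(0)\delta(0)>0$ that the paper also records. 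One cosmetic point: \Cref{prop:stieltjes_monotonic} as stated gives nonnegativity and monotone decrease, not strict positivity; this does not affect the lemma's conclusion as used later.
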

\begin{proof}
We begin by proving a few auxiliary results about the properties of the involved functions $\gamma(t), \delta(t), \varepsilon(t)$ and $\det(X(t))$.

As $T_m = V_m^\ast A V_m$, $S_{M-m} = U_{M-m}^\ast A U_{M-m}$, we have $\spec(T_m)$, $\spec(S_{M-m}) \subset [\lmin,\lmax]$, and therefore $\spec((T_m+tI)^{-1})$, $\spec((S_{M-m}+tI)^{-1}) \subset [\frac{1}{\lmax+t},\frac{1}{\lmin+t}]$. Thus, the Rayleigh quotients $\gamma(t), \delta(t)$ satisfy
\begin{equation}\label{eq:bounds_gamma_delta}
\frac{1}{\lmax+t} \leq \gamma(t) \leq \frac{1}{\lmin+t} \quad \text{ and } \quad\frac{1}{\lmax+t} \leq \delta(t) \leq \frac{1}{\lmin+t}.
\end{equation}
In particular, $\gamma(t)$ and $\delta(t)$ are positive for all $t \geq 0$ and $\max_{t \geq 0} \gamma(t) \leq \frac{1}{\lmin}$, $\max_{t \geq 0} \delta(t) \leq \frac{1}{\lmin}$.

For investigating $\varepsilon(t)$, we exploit standard properties of tridiagonal matrices to write
\[
\varepsilon(t) = (\ve_m^{(m)})^\ast(T_m+tI)^{-1}\ve_1^{(m)} = (-1)^{m+1}\frac{\prod_{i=1}^{m-1} \beta_i}{\prod_{i=1}^m \theta_i+t},
\]
where $\theta_1,\dots,\theta_m \subset [\lmin, \lmax]$ denote the Ritz values (i.e., the eigenvalues of $T_m$). As all $\beta_i, \theta_i > 0$, it is immediate that $(-1)^{m+1}\varepsilon(t)$ is positive and monotonically decreasing in $t$. Lastly, consider 
\[
\det(X(t)) = \gamma(t)\delta(t) - \frac{1}{\beta_{m+1}^2}.
\]
Similarly to $\delta(t)$, we know that $\gamma(t)$ is positive and bounded above, $\max_{t \geq 0} \gamma(t) \leq \frac{1}{\lmin}$, as it is a Rayleigh quotient of $(T_m+tI)^{-1}$. 
It is easy to see that the functions $\gamma(t),\delta(t)$ are monotonically decreasing in $t$ and satisfy $\gamma(t), \delta(t) \rightarrow 0$ for $t \rightarrow \infty$, so that we can conclude that $\det(X(t)) \rightarrow -\frac{1}{\beta_{m+1}^2} < 0$. As $\det(X(t))$ depends continuously on $t$ and $X(t)$ is invertible for all $t \geq 0$, this implies that $\det(X(t)) < 0$ for all $t \geq 0$. In particular, $\det(X(0)) = \gamma(0)\delta(0) - \frac{1}{\beta_{m+1}^2} < 0$ and we have the bound
\[
-\frac{1}{\beta_{m+1}^2} \leq \det(X(t)) \leq \gamma(0)\delta(0) - \frac{1}{\beta_{m+1}^2} < 0 \text{ for all } t\geq 0.
\]
In summary, we can conclude that $(-1)^{m+1}\frac{\delta(t)\varepsilon(t)}{\det(X(t))}$ and $(-1)^m\frac{\varepsilon(t)}{\beta_{m+1}\det(X(t))}$ are positive and go to zero at least as fast as $\frac{1}{1+t}$. Thus, $(-1)^{m+1}f_1, (-1)^mf_2$ fulfill the conditions of \Cref{prop:stieltjes_modified_measure} and are therefore Stieltjes functions.
\end{proof}

With these preparations, we are now in position to prove our main result.

\begin{proof}[Proof of \Cref{thm:near_instance_optimality_stieltjes}]
Throughout this proof, we use the notations established in \Cref{prop:block_partitioning}, \Cref{lem:lowrank_update} and \Cref{lem:f1f2_stieltjes}.

Due to the unitary invariance of the Euclidean norm, we directly obtain
\begin{equation}\label{eq:norm_z}
\|f(A)\vb - \vfmopt\| = \|U_{M-m}\vz_{M-m}\| = \|\vz_{M-m}\|
\end{equation}
from \Cref{prop:block_partitioning}. Using the triangle inequality together with the unitary invariance, we further have
\begin{align}
\|f(A)\vb - \vf_m\| &\leq \|V_m(\vx_m-\vy_m)\| + \|U_{M-m}\vz_{M-m}\| \nonumber\\
                    &= \|\vx_m-\vy_m\| + \|\vz_{M-m}\| \nonumber\\
                    &= \left(1+\frac{\|\vx_m-\vy_m\|}{\|\vz_{M-m}\|}\right) \|f(A)\vb-\vfmopt\|,\label{eq:bound_err_lan}
\end{align}
where we used~\eqref{eq:norm_z} for the last equality. From \Cref{lem:lowrank_update}, we obtain that 
\begin{equation}\label{eq:bound_ratio1}
\frac{\|\vx_m-\vy_m\|}{\|\vz_{M-m}\|} = \frac{\|f_1(T_m)\ve_m^{(m)}\|}{\|f_2(S_{M-m})\ve_1^{(M-m)}\|} \leq \frac{\max_{\lambda \in \spec(T_m)} |f_1(\lambda)|}{\min_{\lambda \in \spec(S_{M-m})} |f_2(\lambda)|}.
\end{equation}
As $|f_1|,|f_2|$ are monotonically decreasing on $(0,\infty)$ and $\spec(T_m)$, $\spec(S_{M-m}) \subset [\lmin,\lmax]$, we find $\max_{\lambda \in \spec(T_m)} |f_1(\lambda)| \leq |f_1(\lmin)|$ and $\min_{\lambda \in \spec(S_{M-m})} |f_2(\lambda)| \geq |f_2(\lmax)|$. Using these bounds,~\eqref{eq:bound_ratio1} implies
\begin{equation}\label{eq:bound_ratio2}
\frac{\|\vx_m-\vy_m\|}{\|\vz_{M-m}\|} \leq \frac{|f_1(\lmin)|}{|f_2(\lmax)|}.
\end{equation}
From~\eqref{eq:bounds_gamma_delta} we have $\delta(t) \leq \frac{1}{\lmin}$, and further $\frac{1}{\lmin+t} \leq \frac{\lmax}{\lmin}\frac{1}{\lmax+t}$ holds for all $t \geq 0$. Using these facts, we can write
\begin{align}
|f_1(\lmin)| &= \int_0^\infty \frac{|\delta(t)| |\varepsilon(t)|}{|\det(X(t))|} \frac{1}{\lmin+t} \dmu \nonumber\\ 
&\leq \frac{\beta_{m+1}\delta(0)\lmax}{\lmin} \int_0^\infty \frac{|\varepsilon(t)|}{\beta_{m+1}|\det(X(t))|} \frac{1}{\lmax+t} \dmu \label{eq:bound_f1f2_intermediate}\\
&= \frac{\beta_{m+1}\lmax}{\lmin^2} |f_2(\lmax)|. \label{eq:bound_f1f2}
\end{align}
Note that in the first and last equality, we exploited that all terms in the integrand have constant sign, so that 
\[
|f_1(z)| = \int_0^\infty \left|\frac{\delta(t)\varepsilon(t)}{\det(X(t))} \frac{1}{z+t}\right| \dmu, \  \
|f_2(z)| = \int_0^\infty \left|\frac{\varepsilon(t)}{\beta_{m+1}\det(X(t))}\frac{1}{z+t}\right| \dmu.\footnote{It appears to be mainly this step which makes it difficult to straightforwardly generalize our result to other function classes such as ``Stieltjes'' functions corresponding to a signed measure $\d\mu$ or general analytic functions represented by the Cauchy integral formula.}
\]
Inserting~\eqref{eq:bound_f1f2} into~\eqref{eq:bound_ratio2} proves~\eqref{eq:mainresult1}. The inequality~\eqref{eq:mainresult2} directly follows by noting that $\beta_{m+1} \leq \|T_m\| \leq \lmax$ and $\kappa(A) = \frac{\lmax}{\lmin}$.
\end{proof}

\begin{remark}\label{rem:beta}
The bound~\eqref{eq:mainresult2} is likely a large overestimate of the actual behavior. In particular, in~\eqref{eq:bound_ratio1} we applied a very rough estimate by upper bounding the numerator and lower bounding the denominator, while in reality one often observes that both are of roughly equal magnitude; see also \Cref{ex:sharpness} below. Another rough estimate occurs when going from~\eqref{eq:mainresult1} to~\eqref{eq:mainresult2} by bounding $\beta_{m+1} \leq \lmax$, in particular taking into account that $\beta_{m+1}$ typically decreases once $\spK_m(A,\vb)$ becomes close to an invariant subspace. If trying to bound the distance to the optimal error in an actual computation, one can omit this estimate, as $\beta_{m+1}$ is readily available from the Lanczos process. Another nice feature of keeping $\beta_{m+1}$ in the bound is that it reflects that upon a lucky breakdown---in which case we have $\beta_{m+1} = 0$---the Lanczos approximation is equal to the optimal approximation from the Krylov space which in this case is $f(A)\vb$.
\end{remark}

\begin{remark}\label{rem:active_eigenvalues}
The bounds in \Cref{thm:near_instance_optimality_stieltjes} can be refined if it is known that the vector $\vb$ only contains contributions from certain eigenvectors of $A$. Let us write $\vb = \sum_{i = 1}^n c_i \vw_i$, where $\vw_i, i = 1,\dots,n$ are the orthonormal eigenvectors of $A$ corresponding to $\lmin = \lambda_1 \leq \lambda_2 \leq \dots \leq \lambda_{n-1} \leq \lambda_n = \lmax$ of $A$. If $c_i = 0$ for $i < j$ and if $c_i = 0$ for $i > k$, then $\lmin$ can be replaced by $\lambda_j$ and $\lmax$ can be replaced by $\lambda_k$, as $\spec(T_m), \spec(S_{M-m}) \subset [\lambda_j, \lambda_k]$ in that case; also see \Cref{ex:effective_condition_number} below.
\end{remark}

\section{Extension to a related function class}\label{sec:related_classes}
Another relevant class of functions, which is intimately related to Stieltjes functions, is given by functions of the form $f(z) = zg(z)$, where $g$ is a Stieltjes function. Practically relevant examples of functions of this type are the square root $f(z) = \sqrt{z}$ and the shifted logarithm $f(z) = \log(z+1)$. The result of \Cref{thm:near_instance_optimality_stieltjes} can straight-forwardly be extended to functions of this class, and we only sketch the corresponding proof.

\begin{theorem}\label{thm:extension}
The statement of \Cref{thm:near_instance_optimality_stieltjes} remains valid if $f(z) = zg(z)$, where $g$ is a Stieltjes function. 
\end{theorem}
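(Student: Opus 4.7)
The plan is to recycle the framework of \Cref{prop:block_partitioning}, \Cref{lem:lowrank_update}, and \Cref{lem:f1f2_stieltjes}, tracking how the extra factor of $z$ in $f(z) = zg(z)$ propagates through the derivation. Defining $\tilde{\vy}_m$, $\tilde{\vx}_m$, and $\tilde{\vz}_{M-m}$ exactly as in \Cref{prop:block_partitioning} but with $f$ in place of $g$, the block-wise identities~\eqref{eq:err_lan} and~\eqref{eq:err_opt} carry over verbatim and the norm estimate~\eqref{eq:bound_err_lan} in the main proof is immediate.

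The first nontrivial step is to adapt \Cref{lem:lowrank_update} to $f = zg$. The key observation is the operator identity $M(M+tI)^{-1} = I - t(M+tI)^{-1}$ applied to both $M = T_M$ and to its block-diagonal counterpart $M = D_M$: subtracting the two resulting integral representations makes the constant $I$-terms cancel, and the Woodbury expansion already derived in the original proof then gives
\[
\begin{bmatrix} \tilde{\vx}_m - \tilde{\vy}_m \\ \tilde{\vz}_{M-m} \end{bmatrix} = \int_0^\infty \frac{t}{\det(X(t))}\, N(t)\ve_1^{(M)} \dmu.
\]
Reading off the two blocks yields $\tilde{\vx}_m - \tilde{\vy}_m = \tilde f_1(T_m)\ve_m^{(m)}$ and $\tilde{\vz}_{M-m} = \tilde f_2(S_{M-m})\ve_1^{(M-m)}$, where $\tilde f_1, \tilde f_2$ differ from $f_1, f_2$ only by an extra factor of $-t$ in the integrand.

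The last step is to reprove the analog of \Cref{lem:f1f2_stieltjes} and then reuse the final estimate. The sign analysis of \Cref{lem:f1f2_stieltjes} carries over with both signs flipped by one power of $-1$, so $(-1)^m\tilde f_1$ and $(-1)^{m+1}\tilde f_2$ become Stieltjes; the extra factor of $t$ does not spoil the integrability condition on the modified measures because $t/(1+t)\leq 1$ while the $1/(1+t)$-weighted density from the Stieltjes case is already integrable. Monotonicity of $|\tilde f_1|, |\tilde f_2|$ on $(0,\infty)$ then feeds into exactly the same chain of estimates as in the proof of \Cref{thm:near_instance_optimality_stieltjes}: the extra $t$ enters $|\tilde f_1(\lmin)|$ and $|\tilde f_2(\lmax)|$ symmetrically and cancels, so the final constants $1 + \beta_{m+1}\lmax/\lmin^2$ and $1 + \kappa(A)^2$ in~\eqref{eq:mainresult1} and~\eqref{eq:mainresult2} are recovered without change. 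The only place that genuinely demands care is the verification of signs and integrability in the analog of \Cref{lem:f1f2_stieltjes}; everything else is a direct transcription of the Stieltjes case.
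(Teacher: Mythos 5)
Your proof is correct and arrives at the same constants, but the key technical step is handled by a genuinely different (and in one respect more careful) route than the paper's. The paper's sketch keeps the representation $\widetilde f_1(z)=zf_1(z)$, $\widetilde f_2(z)=zf_2(z)$, invokes \Cref{prop:zf_increasing} to conclude that $|\widetilde f_1|,|\widetilde f_2|$ are monotonically \emph{increasing}, and accordingly bounds the ratio by $|\widetilde f_1(\lmax)|/|\widetilde f_2(\lmin)|$, using $\frac{1}{\lmax+t}\le\frac{1}{\lmin+t}$. You instead absorb the factor $z$ into the integrand via $M(M+tI)^{-1}=I-t(M+tI)^{-1}$, so your $\tilde f_1,\tilde f_2$ pick up an extra factor of $-t$ under the integral, remain (up to the sign $(-1)^m$ resp.\ $(-1)^{m+1}$) Stieltjes and hence \emph{decreasing} in magnitude, and the closing chain of inequalities is a verbatim rerun of the proof of \Cref{thm:near_instance_optimality_stieltjes} with the ratio $|\tilde f_1(\lmin)|/|\tilde f_2(\lmax)|$; \Cref{prop:zf_increasing} is never needed. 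A point in favour of your route: the true coefficient differences $\tilde\vx_m-\tilde\vy_m$ and $\tilde\vz_{M-m}$ are \emph{not} literally $T_mf_1(T_m)\ve_m^{(m)}$ and $S_{M-m}f_2(S_{M-m})\ve_1^{(M-m)}$; each differs from these by an additive multiple of $\ve_m^{(m)}$ resp.\ $\ve_1^{(M-m)}$ stemming from the constant term in $\frac{z}{z+t}=1-\frac{t}{z+t}$, and your subtraction of the two integral representations is precisely what makes these constants cancel---a subtlety that the paper's one-line appeal to ``an analogous version of \Cref{lem:lowrank_update}'' glosses over (harmlessly for the final bound, since both representations lead to the same estimate). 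Two small points you should tighten: first, your integrability argument implicitly needs that the unweighted densities $\bigl|\frac{\delta(t)\varepsilon(t)}{\det(X(t))}\bigr|$ and $\bigl|\frac{\varepsilon(t)}{\beta_{m+1}\det(X(t))}\bigr|$ are themselves $\mu$-integrable, which follows from the decay $|\varepsilon(t)|\le\prod_{i=1}^{m-1}\beta_i\,(\lmin+t)^{-m}$ established in \Cref{lem:f1f2_stieltjes}; second, for $m=1$ the new density for $\tilde f_2$ is only bounded rather than $O\bigl(\frac{1}{1+t}\bigr)$, so \Cref{prop:stieltjes_modified_measure} does not apply verbatim, although its proof goes through unchanged for bounded densities.
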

\begin{proof}
In this modified setting, an analogous version of \Cref{lem:lowrank_update} holds, where $f_1, f_2$ are replaced by the functions $\widetilde{f}_1(z) = zf_1(z)$ and $\widetilde{f}_2(z) = zf_2(z)$. According to \Cref{prop:zf_increasing}, $|\widetilde{f}_1|, |\widetilde{f}_2|$ are monotonically \emph{increasing} on $(0,\infty)$. Therefore, by following the same steps as in the proof of \Cref{thm:near_instance_optimality_stieltjes}, instead of~\eqref{eq:bound_ratio2}, we find
\begin{equation}
\frac{\|\vx_m-\vy_m\|}{\|\vz_{M-m}\|} \leq \frac{|\widetilde{f}_1(\lmax)|}{|\widetilde{f}_2(\lmin)|}.
\end{equation}
Proceeding in a similar manner as before, this time using $\frac{1}{\lmax+t} \leq \frac{1}{\lmin+t}$ for all $t \geq 0$, we obtain
\begin{align}
|\widetilde{f}_1(\lmax)| &= \lmax \int_0^\infty \frac{|\delta(t)| |\varepsilon(t)|}{|\det(X(t))|} \frac{1}{\lmax+t} \dmu \nonumber\\ 
&\leq \frac{\beta_{m+1}\lmax}{\lmin^2} \lmin\int_0^\infty \frac{|\varepsilon(t)|}{\beta_{m+1}|\det(X(t))|} \frac{1}{\lmin+t} \dmu \nonumber\\
&= \frac{\beta_{m+1}\lmax}{\lmin^2} |\widetilde{f}_2(\lmin)|,
\end{align}
from which the result follows.
\end{proof}

Of course, the comments made in \Cref{rem:beta,rem:active_eigenvalues} also remain valid for \Cref{thm:extension}.

\section{Numerical examples}\label{sec:examples}
In this section, we illustrate our theoretical results by some examples. Note that the experiments in this section are performed on simple toy problems (diagonal $A$ of small size), as properly evaluating the bounds requires knowledge of the exact solution $f(A)\vb$, the optimal approximation $\vfmopt$ and possibly all eigenvalues of $A$, quantities that are not available for large-scale real-world problems. 

All experiments are performed in MATLAB 2024b.
%This is appropriate as our results are mostly of theoretical nature. The practical performance of the Lanczos method for $f(A)\vb$ for large-scale problems is well-studied in many algorithm-oriented papers and not the focus of our work.

%The examples in this section serve two main purposes. First, we illustrate the sharpness of the two bounds~\eqref{eq:mainresult1}--\eqref{eq:mainresult2} from \Cref{thm:near_instance_optimality_stieltjes}, together with the ``intermediate'' bounds\eqref{eq:bound_ratio1} and~\eqref{eq:bound_f1f2_intermediate} appearing in the proof of the theorem. This serves the purpose of understanding which estimates occurring in the proof cause the loss of sharpness in the bound; cf.~also \Cref{rem:beta}. 

%In addition, we compare the bounds arising from \Cref{thm:near_instance_optimality_stieltjes,thm:extension} to previously available results, i.e., to bounds from~\cite{AmselChenGreenbaumMuscoMusco2023} and the well-known near FOV optimality bound~\eqref{eq:near_fov_optimality}. 

\begin{example}\label{ex:sharpness}
\begin{figure}
    \centering
    \pgfplotsset{height=0.43\linewidth,width=0.48\linewidth,compat=1.10,every axis/.append style={legend style={/tikz/every even column/.append style={column sep=6pt}}}}
\pgfplotsset{every tick label/.append style={font=\scriptsize}}

\noindent%
\begin{tikzpicture}[scale=1]%
\begin{groupplot}[group style={group size=2 by 2, horizontal sep=2cm, vertical sep=1.5cm}]

\nextgroupplot[legend columns=3, legend style={at={(.4,1.05)},anchor=south west}, cycle list name=list_bounds, grid=major, 
   	xlabel={\scriptsize Krylov dimension $m$}, ylabel={\scriptsize Error norm}, ymode = log, xmin = 0, xmax = 65]

\addplot+[mark repeat=10] table [x ={n},y ={opt}] {fig/invsqrt_bound_sharpness.dat};\addlegendentry{\scriptsize  Optimal}
\addplot+[mark repeat=10] table [x ={n},y ={lan}] {fig/invsqrt_bound_sharpness.dat};\addlegendentry{\scriptsize Lanczos}
\addplot+[] table [x ={n},y ={b2}] {fig/invsqrt_bound_sharpness.dat};\addlegendentry{\scriptsize Bound~\eqref{eq:mainresult1}}
\addplot+[] table [x ={n},y ={b1}] {fig/invsqrt_bound_sharpness.dat};\addlegendentry{\scriptsize Bound~\eqref{eq:mainresult2}}
\addplot+[] table [x ={n},y ={b4}] {fig/invsqrt_bound_sharpness.dat};\addlegendentry{\scriptsize Bound~\eqref{eq:intermediate1}}
\addplot+[] table [x ={n},y ={b3}] {fig/invsqrt_bound_sharpness.dat};\addlegendentry{\scriptsize Bound~\eqref{eq:intermediate2}}

\nextgroupplot[cycle list name=list_bounds, grid=major, 
   	xlabel={\scriptsize Krylov dimension $m$}, ylabel={\scriptsize  Error norm}, ymode = log, xmin = 0, xmax=30]

\addplot+[mark repeat=10] table [x ={n},y ={opt}] {fig/invsqrt_bound_sharpness2.dat};
\addplot+[mark repeat=10] table [x ={n},y ={lan}] {fig/invsqrt_bound_sharpness2.dat};
\addplot+[] table [x ={n},y ={b2}] {fig/invsqrt_bound_sharpness2.dat};
\addplot+[] table [x ={n},y ={b1}] {fig/invsqrt_bound_sharpness2.dat};
\addplot+[] table [x ={n},y ={b4}] {fig/invsqrt_bound_sharpness2.dat};
\addplot+[] table [x ={n},y ={b3}] {fig/invsqrt_bound_sharpness2.dat};

\nextgroupplot[cycle list name=list_bounds, grid=major, 
   	xlabel={\scriptsize Krylov dimension $m$}, ylabel={\scriptsize  Error norm}, ymode = log, xmin = 0, xmax=66]

\addplot+[mark repeat=10] table [x ={n},y ={opt}] {fig/sqrt_bound_sharpness.dat};
\addplot+[mark repeat=10] table [x ={n},y ={lan}] {fig/sqrt_bound_sharpness.dat};
\addplot+[] table [x ={n},y ={b2}] {fig/sqrt_bound_sharpness.dat};
\addplot+[] table [x ={n},y ={b1}] {fig/sqrt_bound_sharpness.dat};
\addplot+[] table [x ={n},y ={b4}] {fig/sqrt_bound_sharpness.dat};
\addplot+[] table [x ={n},y ={b3}] {fig/sqrt_bound_sharpness.dat};

\nextgroupplot[cycle list name=list_bounds, grid=major, 
   	xlabel={\scriptsize Krylov dimension $m$}, ylabel={\scriptsize  Error norm}, ymode = log, xmin = 0, xmax=27]

\addplot+[mark repeat=10] table [x ={n},y ={opt}] {fig/sqrt_bound_sharpness2.dat};
\addplot+[mark repeat=10] table [x ={n},y ={lan}] {fig/sqrt_bound_sharpness2.dat};
\addplot+[] table [x ={n},y ={b2}] {fig/sqrt_bound_sharpness2.dat};
\addplot+[] table [x ={n},y ={b1}] {fig/sqrt_bound_sharpness2.dat};
\addplot+[] table [x ={n},y ={b4}] {fig/sqrt_bound_sharpness2.dat};
\addplot+[] table [x ={n},y ={b3}] {fig/sqrt_bound_sharpness2.dat};

\end{groupplot}

\end{tikzpicture}
    
    \vspace{-.5cm}
    \caption{Sharpness of the estimates from~\Cref{thm:near_instance_optimality_stieltjes} as well as of certain inequalities from its proof for the matrices $A_1$ (left) and $A_2$ (right) defined in the text of \Cref{ex:sharpness}. The vector $\vb$ has normally distributed random entries and $f$ is the inverse square root (top row) or square root (bottom row).}
    \label{fig:sharpness1}
\end{figure}
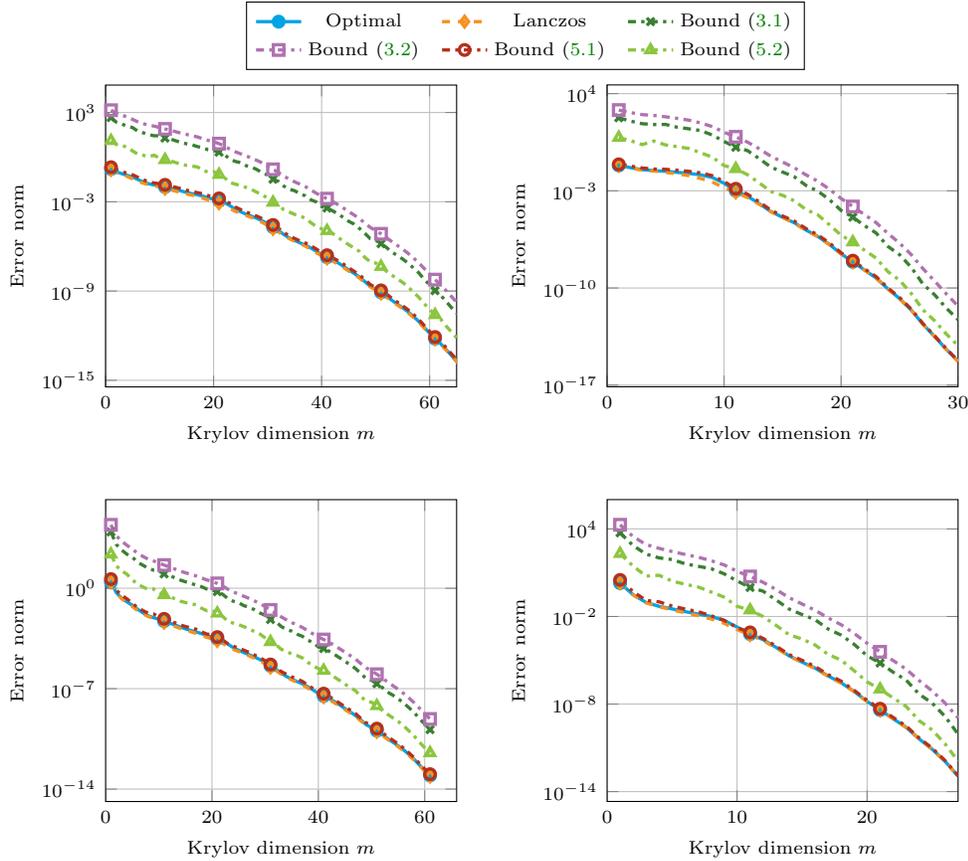

We begin by assessing the sharpness of the bounds and individual estimates. For this, we consider two test matrices inspired by the experiments reported in~\cite{AmselChenGreenbaumMuscoMusco2023}, $A_1 = \diag(1, 2, \dots,100) \in \R^{100 \times 100}$ and $A_2 = \diag(\eta_1,\dots,\eta_{100})  \in \R^{100 \times 100}$, where
\[
\eta_i = \left(1 + 99\left(\frac{1-\rho^{\frac{i-1}{99}}}{1-\rho}\right)\right), \qquad i = 1,\dots, 100,
\]
with $\rho = 0.001$. Obviously, $\kappa(A_1) = \kappa(A_2) = 100$. We construct $\vb \in \R^{100}$ with normally distributed random entries and scale it such that $\|\vb\| = 1$. \Cref{fig:sharpness1} shows our results for these matrices when $f$ is the inverse square root or square root. We observe that in all cases, the convergence curve of the Lanczos method is almost indistinguishable from that of the optimal Krylov approximation. In fact, extensive numerical evidence suggests that the Lanczos method often performs close to optimal also for other problems, so that the ``actual'' constant in~\eqref{eq:near_instance_optimality} probably satisfies $C = \mathcal{O}(1)$, at least for ``well-behaved'' functions like Stieltjes functions. 

\begin{figure}
    \centering
    \pgfplotsset{height=0.43\linewidth,width=0.48\linewidth,compat=1.10,every axis/.append style={legend style={/tikz/every even column/.append style={column sep=6pt}}}}
\pgfplotsset{every tick label/.append style={font=\scriptsize}}

\noindent%
\begin{tikzpicture}[scale=1]%
\begin{groupplot}[group style={group size=2 by 2, horizontal sep=2cm, vertical sep=.25cm}]

\nextgroupplot[cycle list name=list_bounds, grid=major, ylabel={\scriptsize norm}, ymode = log, xmin = 0, xmax = 65, legend pos = south west, xticklabel=\empty]

\addplot+[] table [x ={n},y ={f1T}] {fig/invsqrt_bound_ratio.dat};\addlegendentry{\scriptsize $\|f_1(T_m)\ve_m\|$}
\addplot+[] table [x ={n},y ={f2S}] {fig/invsqrt_bound_ratio.dat};\addlegendentry{\scriptsize  $\|f_2(S_{M-m})\ve_1\|$}

\nextgroupplot[cycle list name=list_bounds, grid=major, ylabel={\scriptsize  norm}, ymode = log, xmin = 0, xmax=30, xticklabel=\empty]

\addplot+[] table [x ={n},y ={f1T}] {fig/invsqrt_bound_ratio2.dat};
\addplot+[] table [x ={n},y ={f2S}] {fig/invsqrt_bound_ratio2.dat};

\nextgroupplot[height=.2\linewidth, grid=major, xlabel={\scriptsize Krylov dimension $m$}, ylabel={\scriptsize  ratio}, xmin = 0, xmax=65]

\addplot+[black, mark = none, line width=1.25pt] table [x ={n},y ={ratio}] {fig/invsqrt_bound_ratio.dat};

\nextgroupplot[height=.2\linewidth, grid=major, xlabel={\scriptsize Krylov dimension $m$}, ylabel={\scriptsize  ratio}, xmin = 0, xmax=30]

\addplot+[black, mark = none, line width=1.25pt] table [x ={n},y ={ratio}] {fig/invsqrt_bound_ratio2.dat};

\end{groupplot}

\end{tikzpicture}
    
    \vspace{-.5cm}
    \caption{Comparison of the norms of the two terms $\|f_1(T_m)\ve_m\|$ and $\|f_2(S_{M-m})\ve_1\|$ contributing to the Lanczos error for the matrices $A_1$ (left) and $A_2$ (right) defined in the text of \Cref{ex:sharpness}. The vector $\vb$ has normally distributed random entries and $f$ is the inverse square root. The bottom panel shows the ratio between the two terms.}
    \label{fig:ratio}
\end{figure}
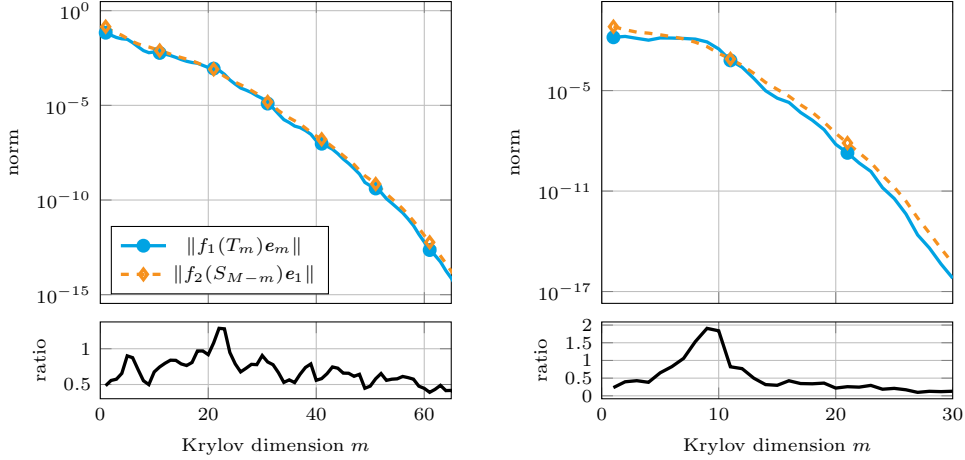

Therefore, the bound~\eqref{eq:mainresult2} of \Cref{thm:near_instance_optimality_stieltjes} largely overestimates the actual ratio between the Lanczos and optimal error, as we expected (see also \Cref{rem:beta}). Bound~\eqref{eq:mainresult1} is sharper, but still an overestimate. We also plot the values of the ``intermediate'' bounds
\begin{equation}\label{eq:intermediate1}
\|f(A)\vb-\vf_m\| \leq \left(1+\frac{\|f_1(T_m)\ve_m^{(m)}\|}{\|f_2(S_{M-m})\ve_1^{(M-m)}\|}\right)\|f(A)\vb-\vfmopt\|
\end{equation}
and
\begin{equation}\label{eq:intermediate2}
\|f(A)\vb-\vf_m\| \leq \left(1+\beta_{m+1}\delta(0)\kappa(A)\|\right)\|f(A)\vb-\vfmopt\|
\end{equation}
arising from the proof of \Cref{thm:near_instance_optimality_stieltjes} in order to illustrate which estimates in the proof cause the loss of sharpness in the bound. For evaluating~\eqref{eq:intermediate1}, the integrals in the definition of $f_1$ and $f_2$ are approximated roughly up to machine precision using the built-in MATLAB function \texttt{integral}.

We observe that~\eqref{eq:intermediate2} is already a lot sharper than~\eqref{eq:mainresult1}, suggesting that $\delta(0) \leq \frac{1}{\lmin}$ is a rather loose estimate. The estimate~\eqref{eq:intermediate1} bounds the actual Lanczos error extremely closely. This is to be expected, as the only slack in this bound comes from the use of the triangle inequality in~\eqref{eq:bound_err_lan}. As the two error components $V_m(\vx_m-\vy_m)$ and $U_{M-m}\vz_{M-m}$ are orthogonal to each other, they actually satisfy
\[
\|f(A)\vb - \vf_m\|^2 = \|V_m(\vx_m-\vy_m)\|^2 + \|U_{M-m}\vz_{M-m}\|^2 = \|\vx_m-\vy_m\|^2 + \|\vz_{M-m}\|^2
\]
by the Pythagorean theorem. Therefore, the triangle inequality introduces a relative slack of at most $\sqrt{2}$. Of course, the prefactors in~\eqref{eq:intermediate1} and~\eqref{eq:intermediate2} cannot be practically computed, as they depend on the unknown matrix $S_{M-m}$ which would only be available upon running the Lanczos method until termination (in~\eqref{eq:intermediate2}, this dependence is hidden inside the function $\delta(t)$).

To further exemplify that bounding $\frac{\|f_1(T_m)\ve_m^{(m)}\|}{\|f_2(S_{M-m})\ve_1^{(M-m)}\|}$ is the main reason for the unnecessary increase in the constant $C$ of our near instance optimality bound, we plot the two norms (as well as their ratio) across all iterations in \Cref{fig:ratio}. Both norms decay at about the same rate as the iteration progresses and their ratio constantly lies in the interval $[0.5, 2]$, indicating that $C = \mathcal{O}(1)$ is indeed a reasonable conjecture. This time, we only report results for the inverse square root, as results for the square root (as well as results of many other experiments not reported here) look strikingly similar.
\end{example}

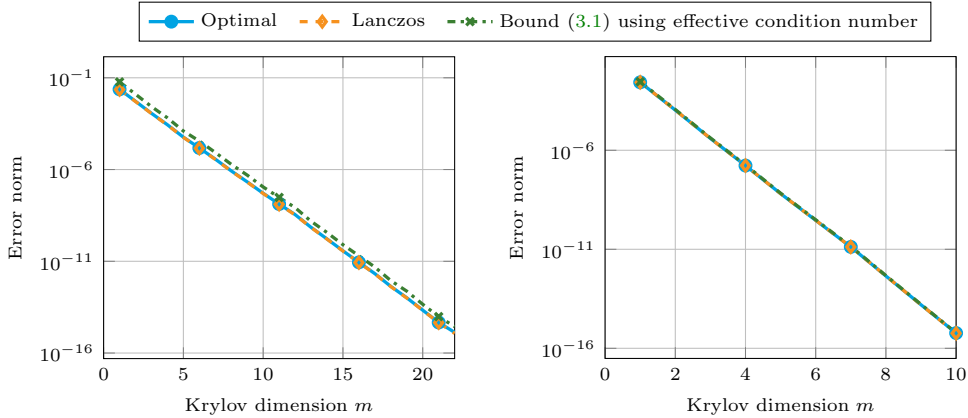
\begin{figure}
    \centering
    \pgfplotsset{height=0.43\linewidth,width=0.48\linewidth,compat=1.10,every axis/.append style={legend style={/tikz/every even column/.append style={column sep=6pt}}}}
\pgfplotsset{every tick label/.append style={font=\scriptsize}}

\noindent%
\begin{tikzpicture}[scale=1]%
\begin{groupplot}[group style={group size=2 by 1, horizontal sep=2cm, vertical sep=2cm}]

\nextgroupplot[legend columns=3, legend style={at={(.1,1.05)},anchor=south west}, cycle list name=list_bounds, grid=major, xlabel={\scriptsize Krylov dimension $m$}, ylabel={\scriptsize Error norm}, ymode = log, xmin = 0, xmax = 22]

\addplot+[mark repeat=5] table [x ={n},y ={opt}] {fig/invsqrt_bound_effective.dat};\addlegendentry{\scriptsize  Optimal}
\addplot+[mark repeat=5] table [x ={n},y ={lan}] {fig/invsqrt_bound_effective.dat};\addlegendentry{\scriptsize Lanczos}
\addplot+[] table [x ={n},y ={b}] {fig/invsqrt_bound_effective.dat};\addlegendentry{\scriptsize Bound~\eqref{eq:mainresult1} using effective condition number}

\nextgroupplot[cycle list name=list_bounds, grid=major, 
   	xlabel={\scriptsize Krylov dimension $m$}, ylabel={\scriptsize  Error norm}, ymode = log, xmin = 0, xmax=10]

\addplot+[mark repeat=3] table [x ={n},y ={opt}] {fig/invsqrt_bound_effective2.dat};
\addplot+[mark repeat=3] table [x ={n},y ={lan}] {fig/invsqrt_bound_effective2.dat};
\addplot+[] table [x ={n},y ={b}] {fig/invsqrt_bound_effective2.dat};

\end{groupplot}

\end{tikzpicture}

    \vspace{-.5cm}
    \caption{Effective bound from~\Cref{thm:near_instance_optimality_stieltjes} for the matrices $A_1$ (left) and $A_2$ (right) defined in the text of \Cref{ex:sharpness}. The function $f$ is the inverse square root and the vector $\vb$ has normally distributed contribution from the eigenvectors $\vw_{26},\dots,\vw_{75}$, while the other eigenvectors have zero contribution. Thus, in~\eqref{eq:mainresult1}, we replace $\lmin$ by $\lambda_{26}$ and $\lmax$ by $\lambda_{75}$.}
    \label{fig:effective}
\end{figure}

\begin{example}\label{ex:effective_condition_number}
Next, we use almost the same setup as in \Cref{ex:sharpness}, but modify the vector $\vb$ such that it only contains contributions from some eigenvectors. This serves the purpose of illustrating the statement of \Cref{rem:active_eigenvalues}. As $A_1, A_2$ are diagonal, an orthonormal basis of eigenvectors is $\ve_1^{(n)}, \dots,\ve_n^{(n)}$ and the corresponding eigenvalues are ordered ascendingly, $\lmin = \lambda_1 \leq \lambda_2 \leq \dots \leq \lambda_n = \lmax$. We let $\vb = \sum_{i=26}^{75} c_i\ve_i^{(n)}$ where $c_i$ are normally distributed (and scaled such that $\vb$ has unit norm). In this situation, the extremal eigenvalues of $A$ in the bounds of \Cref{thm:near_instance_optimality_stieltjes} can be replaced by their effective counterparts $\lambda_{26}$ and $\lambda_{75}$, so that the constant in our near-optimality result becomes much smaller. The corresponding results are depicted in \Cref{fig:effective}. We again only report results for the inverse square root function, as the results for the square root are very similar. As expected, convergence becomes more rapid when $\vb$ contains only contributions from some part of the eigenvectors of $A$, and our bound~\eqref{eq:mainresult1} very tightly follows the actual error due to the small effective condition number.
\end{example}

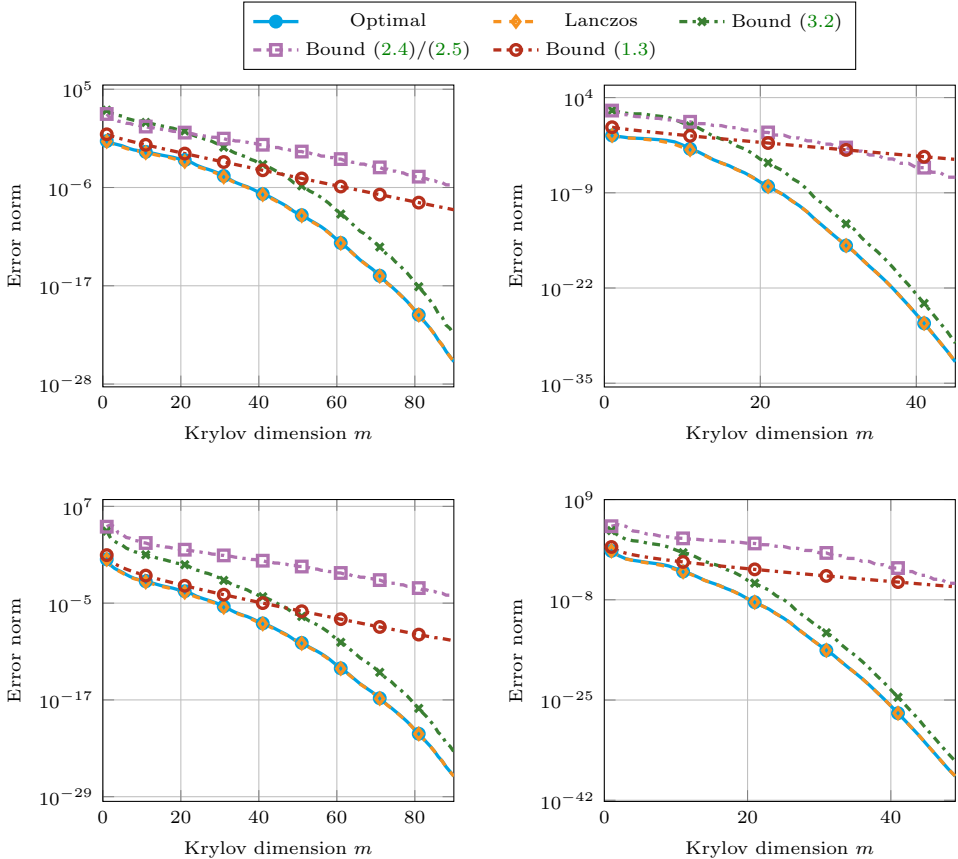
\begin{figure}
    \centering
    \pgfplotsset{height=0.43\linewidth,width=0.48\linewidth,compat=1.10,every axis/.append style={legend style={/tikz/every even column/.append style={column sep=6pt}}}}
\pgfplotsset{every tick label/.append style={font=\scriptsize}}

\noindent%
\begin{tikzpicture}[scale=1]%
\begin{groupplot}[group style={group size=2 by 2, horizontal sep=2cm, vertical sep=1.5cm}]

\nextgroupplot[legend columns=3, legend style={at={(.4,1.05)},anchor=south west}, cycle list name=list_bounds, grid=major, 
   	xlabel={\scriptsize Krylov dimension $m$}, ylabel={\scriptsize Error norm}, ymode = log, xmin = 0, xmax = 90]

\addplot+[mark repeat=10] table [x ={n},y ={opt}] {fig/invsqrt_comparison.dat};\addlegendentry{\scriptsize  Optimal}
\addplot+[mark repeat=10] table [x ={n},y ={lan}] {fig/invsqrt_comparison.dat};\addlegendentry{\scriptsize Lanczos}
\addplot+[] table [x ={n},y ={b1}] {fig/invsqrt_comparison.dat};\addlegendentry{\scriptsize Bound~\eqref{eq:mainresult2}}
\addplot+[] table [x ={n},y ={b2}] {fig/invsqrt_comparison.dat};\addlegendentry{\scriptsize Bound~\eqref{eq:near_spectrum_optimality_invsqrt}/\eqref{eq:near_spectrum_optimality_sqrt}}
\addplot+[] table [x ={n},y ={b3}] {fig/invsqrt_comparison.dat};\addlegendentry{\scriptsize Bound~\eqref{eq:near_fov_optimality}}

\nextgroupplot[cycle list name=list_bounds, grid=major, 
   	xlabel={\scriptsize Krylov dimension $m$}, ylabel={\scriptsize  Error norm}, ymode = log, xmin = 0, xmax=45]

\addplot+[mark repeat=10] table [x ={n},y ={opt}] {fig/invsqrt_comparison2.dat};
\addplot+[mark repeat=10] table [x ={n},y ={lan}] {fig/invsqrt_comparison2.dat};
\addplot+[] table [x ={n},y ={b1}] {fig/invsqrt_comparison2.dat};
\addplot+[] table [x ={n},y ={b2}] {fig/invsqrt_comparison2.dat};
\addplot+[] table [x ={n},y ={b3}] {fig/invsqrt_comparison2.dat};

\nextgroupplot[cycle list name=list_bounds, grid=major, 
   	xlabel={\scriptsize Krylov dimension $m$}, ylabel={\scriptsize  Error norm}, ymode = log, xmin = 0, xmax=90]

\addplot+[mark repeat=10] table [x ={n},y ={opt}] {fig/sqrt_comparison.dat};
\addplot+[mark repeat=10] table [x ={n},y ={lan}] {fig/sqrt_comparison.dat};
\addplot+[] table [x ={n},y ={b1}] {fig/sqrt_comparison.dat};
\addplot+[] table [x ={n},y ={b2}] {fig/sqrt_comparison.dat};
\addplot+[] table [x ={n},y ={b3}] {fig/sqrt_comparison.dat};

\nextgroupplot[cycle list name=list_bounds, grid=major, 
   	xlabel={\scriptsize Krylov dimension $m$}, ylabel={\scriptsize  Error norm}, ymode = log, xmin = 0, xmax=49]

\addplot+[mark repeat=10] table [x ={n},y ={opt}] {fig/sqrt_comparison2.dat};
\addplot+[mark repeat=10] table [x ={n},y ={lan}] {fig/sqrt_comparison2.dat};
\addplot+[] table [x ={n},y ={b1}] {fig/sqrt_comparison2.dat};
\addplot+[] table [x ={n},y ={b2}] {fig/sqrt_comparison2.dat};
\addplot+[] table [x ={n},y ={b3}] {fig/sqrt_comparison2.dat};

\end{groupplot}

\end{tikzpicture}
    
    \vspace{-.25cm}
    \caption{Comparison of the near instance optimality guarantee from~\Cref{thm:near_instance_optimality_stieltjes} to the near spectrum optimality guarantees~\eqref{eq:near_spectrum_optimality_invsqrt} and~\eqref{eq:near_spectrum_optimality_sqrt} from~\cite{AmselChenGreenbaumMuscoMusco2023} as well as the near FOV optimality guarantee~\eqref{eq:near_fov_optimality} for the matrices $A_1$ (left) and $A_2$ (right) defined in the text of \Cref{ex:sharpness}. The vector $\vb$ has normally distributed random entries and $f$ is the inverse square root (top row) or square root (bottom row).}
    \label{fig:comparison_invsqrt_sqrt}
\end{figure}

\begin{example}\label{ex:comparison_amsel_sqrt}
Next, we compare our near optimality guarantees to the near spectrum optimality guarantees~\eqref{eq:near_spectrum_optimality_invsqrt} and~\eqref{eq:near_spectrum_optimality_sqrt} derived in~\cite{AmselChenGreenbaumMuscoMusco2023} for the (inverse) square root as well as to the classical near FOV optimality guarantee~\eqref{eq:near_fov_optimality}. We again use the same experimental setup as in \Cref{ex:sharpness}. As in the examples reported in~\cite{AmselChenGreenbaumMuscoMusco2023}, we use the Remez algorithm for computing the best polynomial approximation on $[\lmin,\lmax]$ or $\spec(A_i), i = 1,2$, respectively. In order to show results up to high precisions, we use variable precision arithmetic via the \texttt{vpa} command from the MATLAB Symbolic Math Toolbox.

The results of this experiment are depicted in~\cref{fig:comparison_invsqrt_sqrt}. As expected, we clearly observe that our new bound from \Cref{thm:near_instance_optimality_stieltjes} much more accurately captures the behavior of the Lanczos approximation. Due to halving the degree of the polynomial approximation in~\eqref{eq:near_spectrum_optimality_invsqrt} and~\eqref{eq:near_spectrum_optimality_sqrt}, it even takes some number of iterations until the improved slope becomes noticeable and these bounds lie below the near FOV optimality guarantee (although this is of course highly dependent on the problem at hand).
\end{example}

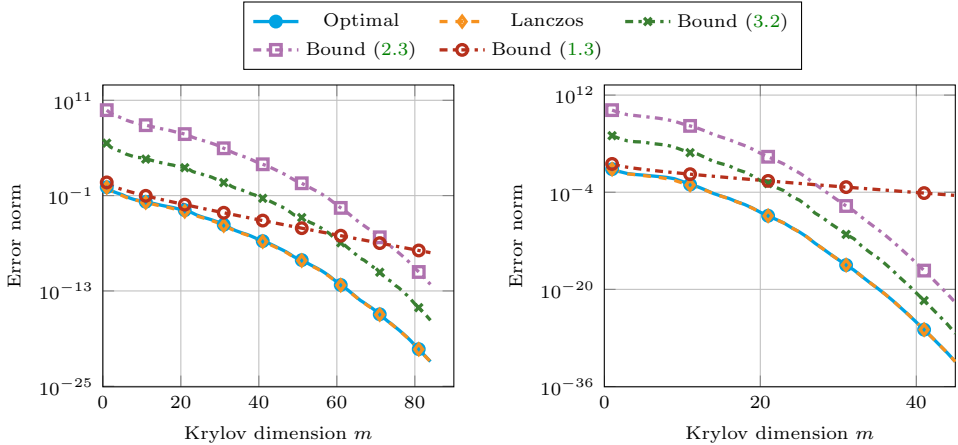
\begin{figure}
    \centering
    \pgfplotsset{height=0.43\linewidth,width=0.48\linewidth,compat=1.10,every axis/.append style={legend style={/tikz/every even column/.append style={column sep=6pt}}}}
\pgfplotsset{every tick label/.append style={font=\scriptsize}}

\noindent%
\begin{tikzpicture}[scale=1]%
\begin{groupplot}[group style={group size=2 by 1, horizontal sep=2cm, vertical sep=2cm}]

\nextgroupplot[legend columns=3, legend style={at={(.4,1.05)},anchor=south west}, cycle list name=list_bounds, grid=major, 
   	xlabel={\scriptsize Krylov dimension $m$}, ylabel={\scriptsize Error norm}, ymode = log, xmin = 0, xmax = 90]

\addplot+[mark repeat=10] table [x ={n},y ={opt}] {fig/log_comparison.dat};\addlegendentry{\scriptsize  Optimal}
\addplot+[mark repeat=10] table [x ={n},y ={lan}] {fig/log_comparison.dat};\addlegendentry{\scriptsize Lanczos}
\addplot+[] table [x ={n},y ={b1}] {fig/log_comparison.dat};\addlegendentry{\scriptsize Bound~\eqref{eq:mainresult2}}
\addplot+[] table [x ={n},y ={b2}] {fig/log_comparison.dat};\addlegendentry{\scriptsize Bound~\eqref{eq:bound_amsel_etal}}
\addplot+[] table [x ={n},y ={b3}] {fig/log_comparison.dat};\addlegendentry{\scriptsize Bound~\eqref{eq:near_fov_optimality}}

\nextgroupplot[cycle list name=list_bounds, grid=major, 
   	xlabel={\scriptsize Krylov dimension $m$}, ylabel={\scriptsize  Error norm}, ymode = log, xmin = 0, xmax=45]

\addplot+[mark repeat=10] table [x ={n},y ={opt}] {fig/log_comparison2.dat};
\addplot+[mark repeat=10] table [x ={n},y ={lan}] {fig/log_comparison2.dat};
\addplot+[] table [x ={n},y ={b1}] {fig/log_comparison2.dat};
\addplot+[] table [x ={n},y ={b2}] {fig/log_comparison2.dat};
\addplot+[] table [x ={n},y ={b3}] {fig/log_comparison2.dat};

\end{groupplot}

\end{tikzpicture}

    \vspace{-.25cm}
    \caption{Comparison of the near instance optimality guarantee from~\Cref{thm:near_instance_optimality_stieltjes} to the near instance optimality guarantee~\eqref{eq:bound_amsel_etal} from~\cite{AmselChenGreenbaumMuscoMusco2023} as well as the near FOV optimality guarantee~\eqref{eq:near_fov_optimality} for the matrices $A_3$ (left) and $A_4$ (right) defined in the text of \Cref{ex:comparison_amsel_log}. The vector $\vb$ has normally distributed random entries and $f$ is the logarithm (or a degree-10 rational approximation of the logarithm for using~\eqref{eq:bound_amsel_etal}).}
    \label{fig:comparison_log}
\end{figure}

\begin{example}\label{ex:comparison_amsel_log}
In our last experiment, we consider the matrix logarithm $\log(A)$, which fits into our framework because $\log(1+z)$ is of the form $zg(z)$ with $g$ Stieltjes. Thus, as long as $\lmin > 1$, we can apply our theory to $\log(I+B)$ with $B = A-I$. We compare with the near instance optimality guarantee~\eqref{eq:bound_amsel_etal} for rational functions and with the near FOV optimality guarantee~\eqref{eq:near_fov_optimality}. As it is done in~\cite{AmselChenGreenbaumMuscoMusco2023}, we construct a degree-10 rational approximation for the logarithm via the BRASIL algorithm~\cite{hofreither2021algorithm} in order to apply the bound~\eqref{eq:bound_amsel_etal}. As the ``Stieltjes formulation'' of the logarithm is only applicable to matrices with $\lmin > 1$, we slightly modify the test matrices from the previous experiments. Specifically, in analogy to $A_1$, we construct $A_3$ with equidistantly spaced diagonal entries ranging from $1.1$ to $110$ and in analogy to $A_2$, the matrix $A_4$ has geometrically spaced eigenvalues in $[1.1, 110]$. In particular, as before, both test matrices have a condition number of $100$. The corresponding results are presented in \Cref{fig:comparison_log}. One shortcoming of our bound~\eqref{eq:mainresult1} that can be observed is that it involves a larger constant now, as it depends on $\kappa(B) \approx 1000$ instead of $\kappa(A) = 100$. The closer the smallest eigenvalue of $A$ is to $1$, the more the bound will deteriorate. The bound~\eqref{eq:bound_amsel_etal} is not affected by this. Instead, the magnitude of its constant depends on the rational function degree that is required for a satisfactory accuracy. Both near optimality guarantees resolve the convergence slope very accurately, while the FOV bound~\eqref{eq:near_fov_optimality} fails to capture the superlinear convergence.
\end{example}

\section{Conclusions}\label{sec:conclusion}
We have proven that the Lanczos method for approximating $f(A)\vb$ is near instance optimal for Stieltjes functions and a related class of functions. Notable functions of interest contained in the two considered classes are the square root, inverse square root and shifted logarithm. We illustrated with examples that the bounds resulting from our near optimality result are much sharper and more predictive of the actual behavior than previously available bounds.

As near instance optimality implies near spectrum optimality, one important consequence of our analysis is that one can analyze the Lanczos method for Stieltjes functions using polynomial approximation on the discrete set of eigenvalues instead of on the field of values.

While our analysis substantially improves over existing results, the constant involved in our bounds is typically still a large overestimate, so that it would be desirable to further reduce it. To foster future improvements in this are, we have illustrated in examples which estimates in our proof are the main cause for the loss of sharpness. 

An obvious direction for future research is the extension of our results to more general $f$, e.g., by using the Cauchy integral formula. This introduces some additional technical difficulties, and experiments reported in~\cite[Appendix~E.3]{AmselChenGreenbaumMuscoMusco2023} suggest that a clean and simple bound with prefactor $C$ independent of $f$ might not be obtainable for general functions (e.g., it is conjectured there that for $A^{-\ell}\vb$, the constant satisfies $C = \Omega(\kappa(A)^{\ell/2})$.

It would also be interesting to generalize our work to the block Lanczos method, in which case the matrix $T_m$ is block tridiagonal and the update considered in~\eqref{eq:low_rank_update} is of a higher rank than two.

\paragraph{Acknowledgment} The author wishes to thank Daniel Kressner for several fruitful discussions on the topic as well as Emil Krieger for helpful comments on an earlier version of this manuscript.

\appendix
\section{Stieltjes and related functions}\label{appendix:stieltjes_functions}
In this section, we review some basic properties of Stieltjes functions and provide some auxiliary results that are required in our derivations. While these results are certainly not new and are well-known to researchers working in the area, it is difficult to find some of the statements in precisely the required form in the literature. We therefore present short proofs for some results to make the treatment as self-contained as possible. Our presentation is inspired by~\cite{AlzerBerg2002,Berg2007,SchillingSongVondracek2012}.

While not directly obvious from the general integral form~\eqref{eq:stieltjes_function}, the class of Stieltjes functions includes many functions of practical interest, including the inverse function $f(z) = \frac{1}{z}$, rational functions in partial fraction form with pairwise distinct, negative poles, $f(z) = \sum_{i=0}^\ell \frac{\sigma_i}{z+t_i}$, where $t_i \geq 0, \mu_i > 0$, inverse fractional powers $f(z) = z^{-\alpha},$ $\alpha \in (0,1)$, and the function $f(z) = \frac{\log(1+z)}{z}$; see~\cite{Berg2007} for proofs that the above are indeed Stieltjes functions as well as for many further examples.

Any Stieltjes function is analytic in the slit plane $\C \setminus (-\infty,0]$ and \emph{completely monotonic}.
\begin{proposition}\label{prop:stieltjes_monotonic}
Let $f$ be a Stieltjes function of the form~\eqref{eq:stieltjes_function}. Then $f$ is completely monotonic, i.e.,
\[
(-1)^k f^{(k)}(z) \geq 0 \text{ for } k \in \mathbb{N}_0 \text{ and } z \in (0,\infty).
\]
\end{proposition}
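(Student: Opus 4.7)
The plan is to differentiate the Stieltjes integral representation
\[
f(z) = \int_0^\infty \frac{1}{z+t} \dmu
\]
termwise $k$ times under the integral sign. Since
\[
\frac{\d^k}{\d z^k}\frac{1}{z+t} = \frac{(-1)^k k!}{(z+t)^{k+1}},
\]
formal differentiation gives
\[
f^{(k)}(z) = \int_0^\infty \frac{(-1)^k k!}{(z+t)^{k+1}} \dmu,
\]
whence
\[
(-1)^k f^{(k)}(z) = \int_0^\infty \frac{k!}{(z+t)^{k+1}} \dmu \geq 0
\]
for all $z \in (0,\infty)$, because the integrand is non-negative and $\mu$ is monotonically increasing.

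The only non-routine step is justifying the interchange of differentiation and integration. I would fix an arbitrary compact interval $[a,b] \subset (0,\infty)$ and, for $z \in [a,b]$, produce a $\mu$-integrable dominating function for the integrand of each derivative. Since $z + t \geq a + t$ for $z \geq a$ and $t \geq 0$, one has
\[
\frac{k!}{(z+t)^{k+1}} \leq \frac{k!}{(a+t)^{k+1}} \leq \frac{k!}{a^k(a+t)} \leq \frac{C_{k,a}}{1+t}
\]
for a suitable constant $C_{k,a}$ depending only on $k$ and $a$. The assumption $\int_0^\infty \frac{1}{1+t}\dmu < \infty$ in the definition of a Stieltjes function then provides the required integrable majorant, and an inductive application of the standard differentiation-under-the-integral theorem (e.g.\ via dominated convergence applied to difference quotients) yields the claimed formula for $f^{(k)}$. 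Because $[a,b]$ was arbitrary, the identity and the sign of $(-1)^k f^{(k)}$ hold throughout $(0,\infty)$.

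The main obstacle is purely the dominated-convergence bookkeeping needed to legitimize differentiation under the integral; once that is handled, complete monotonicity is immediate from the explicit positive integrand. I expect the proof to be essentially two short paragraphs: one establishing the interchange via the majorant above, and one reading off the sign of $(-1)^k f^{(k)}$ from the resulting formula.
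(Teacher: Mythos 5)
Your argument is correct and is the standard one: differentiating $\frac{1}{z+t}$ under the integral gives $(-1)^k k!\,(z+t)^{-(k+1)}$, and the majorant $\frac{k!}{(a+t)^{k+1}} \leq \frac{C_{k,a}}{1+t}$ on compact subsets of $(0,\infty)$, integrable by the defining condition $\int_0^\infty \frac{1}{1+t}\dmu < \infty$, legitimizes the interchange. Note that the paper itself states this proposition without proof, treating it as a classical fact from the literature on completely monotonic functions; your sketch supplies exactly the argument one would write to make it self-contained, and I see no gaps in it.
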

In particular, \Cref{prop:stieltjes_monotonic} implies that $f$ is nonnegative and monotonically decreasing on $(0,\infty)$.

Several practically important functions are not Stieltjes functions themselves, but of the form $f(z) = zg(z)$, where $g$ is a Stieltjes function. For example $f(z) = \sqrt{z} = zz^{-1/2}$ and $f(z) = \log(1+z) = z\frac{\log(1+z)}{z}$ are of this form. As an easy consequence of \Cref{prop:stieltjes_monotonic}, these functions are nonnegative and monotonically \emph{increasing} on $(0,\infty)$.

\begin{proposition}\label{prop:zf_increasing}
Let $f(z) = zg(z)$, where $g$ is a Stieltjes function. Then $f$ is nonnegative and monotonically increasing on $(0,\infty)$.
\end{proposition}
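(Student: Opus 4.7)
The plan is to handle the two claims separately. Nonnegativity is immediate: by \Cref{prop:stieltjes_monotonic} applied with $k=0$, a Stieltjes function $g$ satisfies $g(z) \geq 0$ on $(0,\infty)$, so the product $f(z) = z g(z)$ is nonnegative on $(0,\infty)$.

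For monotonicity, I would write $g$ in its integral representation $g(z) = \int_0^\infty \frac{1}{z+t}\,\mathrm{d}\mu(t)$ and simply differentiate $f(z) = z g(z)$ term by term. Since the integrand $\frac{z}{z+t}$ and its partial derivative in $z$ are dominated on any compact subinterval of $(0,\infty)$ by an integrable function against $\mathrm{d}\mu$ (using that $\int_0^\infty \frac{1}{1+t}\,\mathrm{d}\mu(t) < \infty$), differentiation under the integral sign is justified, giving
\begin{equation*}
f'(z) \;=\; \int_0^\infty \frac{\partial}{\partial z}\!\left(\frac{z}{z+t}\right)\mathrm{d}\mu(t) \;=\; \int_0^\infty \frac{t}{(z+t)^2}\,\mathrm{d}\mu(t).
\end{equation*}
Because $\mu$ is supported on $[0,\infty)$ and $z > 0$, the integrand is nonnegative, so $f'(z) \geq 0$ on $(0,\infty)$. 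This establishes that $f$ is monotonically increasing on $(0,\infty)$.

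There is no real obstacle here; the only mildly delicate point is justifying the exchange of differentiation and integration, which is standard given the measure-integrability assumption in the definition~\eqref{eq:stieltjes_function}. Alternatively, one could avoid differentiation entirely by noting that for $0 < z_1 < z_2$,
\begin{equation*}
f(z_2) - f(z_1) \;=\; \int_0^\infty\!\left(\frac{z_2}{z_2+t} - \frac{z_1}{z_1+t}\right)\mathrm{d}\mu(t) \;=\; \int_0^\infty \frac{(z_2 - z_1)\,t}{(z_1+t)(z_2+t)}\,\mathrm{d}\mu(t) \;\geq\; 0,
\end{equation*}
which gives the monotonicity statement directly without appealing to differentiation under the integral sign.
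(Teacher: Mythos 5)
Your proof is correct and follows essentially the same route as the paper: nonnegativity from $g \geq 0$ and $z>0$, then differentiation under the integral sign to obtain $f'(z) = \int_0^\infty t/(z+t)^2 \,\mathrm{d}\mu(t) \geq 0$. Your added justification of the interchange of differentiation and integration, and the alternative difference-quotient argument that avoids it entirely, are both fine but not needed beyond what the paper does.
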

\begin{proof}
Clearly, $zf(z) \geq 0$ on $(0,\infty)$, as $f(z) \geq 0$ and $z > 0$. To show that $f(z)$ is monotonically increasing, we compute its derivative:
\[
f'(z) = \frac{d}{dz} \left( \int_0^\infty \frac{z}{t + z} \, d\mu(t) \right) = \int_0^\infty \frac{\partial}{\partial z} \left( \frac{z}{t + z} \right) d\mu(t) = \int_0^\infty \frac{t}{(t + z)^2} \, d\mu(t).
\]

Since \( t \geq 0 \), \( z > 0 \), and \( d\mu(t) \geq 0 \), the integrand \( \frac{t}{(t + z)^2} \) is non-negative. Therefore, \( f'(z) \geq 0 \) for all \( z > 0 \).
\end{proof}

A further auxiliary result that we require in the proof of \Cref{lem:f1f2_stieltjes} is given in the following proposition. It gives conditions under which multiplying the integrand in~\eqref{eq:stieltjes_function} by a function $\alpha(t)$ results in a Stieltjes function again.

\begin{proposition}\label{prop:stieltjes_modified_measure}
Let $f$ be a Stieltjes function of the form~\eqref{eq:stieltjes_function} and assume that $\alpha(t)$ is nonnegative on $(0, \infty)$ and goes to zero at least as fast as $\frac{1}{1+t}$, i.e., there exists $c > 0$ such that $\alpha(t) \leq \frac{c}{1+t}$. Then
\[
g(t) = \int_0^\infty \frac{\alpha(t)}{z+t} \dmu
\]
is a Stieltjes function.
\end{proposition}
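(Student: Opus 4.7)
The plan is to exhibit $g$ as a Stieltjes function by identifying the appropriate defining measure directly from the integrand. Specifically, I will define the new measure $\nu$ on $[0,\infty)$ via $\d\nu(t) = \alpha(t)\dmu$, so that
\[
g(z) = \int_0^\infty \frac{1}{z+t} \d\nu(t),
\]
and then verify that $\nu$ satisfies the two conditions required in the definition~\eqref{eq:stieltjes_function}: monotonicity (equivalently, nonnegativity of the measure) and the integrability of $\frac{1}{1+t}$ against $\d\nu$.

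For the first condition, monotonicity of $\nu$ is immediate from the assumption that $\alpha(t) \geq 0$ on $(0,\infty)$ combined with the fact that $\mu$ is monotonically increasing: the product of a nonnegative weight with a nonnegative measure yields a nonnegative measure, so $\nu$ is monotonically increasing. For the second condition, the growth hypothesis $\alpha(t) \leq \frac{c}{1+t}$ gives the pointwise bound $\frac{\alpha(t)}{1+t} \leq \frac{c}{(1+t)^2} \leq \frac{c}{1+t}$ for all $t \geq 0$, so that
\[
\int_0^\infty \frac{1}{1+t} \d\nu(t) = \int_0^\infty \frac{\alpha(t)}{1+t} \dmu \leq c\int_0^\infty \frac{1}{1+t} \dmu < \infty,
\]
where the last inequality is the integrability condition that $\mu$ itself satisfies by virtue of $f$ being a Stieltjes function. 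This shows $\nu$ is a valid Stieltjes measure, and hence $g$ is a Stieltjes function.

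There is no substantive obstacle here; the proof is essentially a matter of unpacking the definitions and applying the assumed decay of $\alpha$ to transfer the integrability property from $\mu$ to $\nu$. The only small point of care is to confirm that the resulting $\nu$ is indeed a well-defined (positive) Borel measure on $[0,\infty)$, which follows from standard measure-theoretic arguments about measures with nonnegative densities with respect to a given measure.
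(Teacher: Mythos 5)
Your proposal is correct and follows essentially the same route as the paper: both define the new measure by weighting $\d\mu$ with the nonnegative density $\alpha$ (the paper writes this as $\mu_1(t)=\int_0^t \alpha(\tau)\,\d\mu(\tau)$), check monotonicity from $\alpha\geq 0$, and transfer the integrability condition using $\alpha(t)\leq c/(1+t)$. No differences worth noting.
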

\begin{proof}
Because $\alpha$ is nonnegative,
\[
\mu_1(t) = \int_0^t \alpha(t) \d\mu(\tau)
\]
is nonnegative and monotonically increasing and because $f$ is a Stieltjes function,
\[
\mu_1(t) \leq c\int_0^t \frac{1}{1+t} \d\mu(\tau) < \infty,
\]
so that it is well-defined. Clearly, 
\[
\int_0^\infty \frac{\alpha(t)}{z+t} \dmu = \int_0^\infty \frac{1}{z+t} \d \mu_1(t),
\]
and we have
\[
\int_0^\infty \frac{1}{1+t} \d\mu_1(t) = \int_0^\infty \frac{\alpha(t)}{1+t} \d\mu(t) \leq \int_0^\infty \frac{c}{(1+t)^2} \dmu < \infty.
\]
Thus, $g$ is a Stieltjes function.
\end{proof}

\bibliography{matrixfunctions}

\newcommand{\noopsort}[1]{} \newcommand{\printfirst}[2]{#1}
  \newcommand{\singleletter}[1]{#1} \newcommand{\switchargs}[2]{#2#1}
\begin{thebibliography}{10}

\bibitem{AlzerBerg2002}
H.~Alzer and C.~Berg.
\newblock Some classes of completely monotonic functions.
\newblock {\em Ann.\ Acad.\ Sci.\ Fenn., Math.}, 27:445--460, 2002.

\bibitem{AmselChenGreenbaumMuscoMusco2023}
N.~Amsel, T.~Chen, A.~Greenbaum, C.~Musco, and C.~Musco.
\newblock Nearly optimal approximation of matrix functions by the {L}anczos
  method.
\newblock {\em arXiv preprint arXiv:2303.03358}, 2023.

\bibitem{Arnoldi1951}
W.~E. Arnoldi.
\newblock The principle of minimized iteration in the solution of the matrix
  eigenvalue problem.
\newblock {\em Q.\ Appl.\ Math.}, 9:17--29, 1951.

\bibitem{BeckermannKuijlaars2001}
B.~Beckermann and A.~B.~J. Kuijlaars.
\newblock Superlinear convergence of conjugate gradients.
\newblock {\em SIAM J.\ Numer.\ Anal.}, 39(1):300--329, 2001.

\bibitem{BeckermannKuijlaars2002}
B.~Beckermann and A.~B.~J. Kuijlaars.
\newblock Superlinear {CG} convergence for special right-hand sides.
\newblock {\em Electron.\ Trans.\ Numer.\ Anal.}, 14:1--19, 2002.

\bibitem{BeckermannReichel2009}
B.~Beckermann and L.~Reichel.
\newblock Error estimation and evaluation of matrix functions via the {F}aber
  transform.
\newblock {\em SIAM J. Numer. Anal.}, 47:3849--3883, 2009.

\bibitem{BenziBoito2020}
M.~Benzi and P.~Boito.
\newblock Matrix functions in network analysis.
\newblock {\em GAMM-Mitteilungen}, 43(3):e202000012, 2020.

\bibitem{BenziSimoncini2017}
M.~Benzi and V.~Simoncini.
\newblock Approximation of functions of large matrices with {K}ronecker
  structure.
\newblock {\em Numer.\ Math.}, 135(1):1--26, 2017.

\bibitem{Berg2007}
C.~Berg.
\newblock {S}tieltjes-{P}ick-{B}ernstein-{S}choenberg and their connection to
  complete monotonicity.
\newblock In J.~Mateu and E.~Porcu, editors, {\em Positive Definite Functions.
  From Schoenberg to Space-Time Challenges}. Dept.\ of Mathematics, University
  Jaume I, Castell\'{o}n de la Plana, Spain, 2008.

\bibitem{ChenGreenbaumMuscoMusco2024}
T.~Chen, A.~Greenbaum, C.~Musco, and C.~Musco.
\newblock Low-memory krylov subspace methods for optimal rational matrix
  function approximation.
\newblock {\em SIAM J.\ Matrix Anal.\ Appl.}, 44(2):670--692, 2023.

\bibitem{chen2023optimal}
T.~Chen, A.~Greenbaum, and N.~Wellen.
\newblock Optimal polynomial approximation to rational matrix functions using
  the {A}rnoldi algorithm.
\newblock {\em arXiv preprint arXiv:2306.17308}, 2023.

\bibitem{chen2024near}
T.~Chen and G.~Meurant.
\newblock Near-optimal convergence of the full orthogonalization method.
\newblock {\em Electron.\ Trans.\ Numer.\ Anal.}, 60:421--427, 2024.

\bibitem{DruskinGreenbaumKnizhnerman1998}
V.~Druskin, A.~Greenbaum, and L.~Knizhnerman.
\newblock Using nonorthogonal {L}anczos vectors in the computation of matrix
  functions.
\newblock {\em SIAM J.\ Sci.\ Comput.}, 19(1):38--54, 1998.

\bibitem{DruskinKnizhnerman1989}
V.~Druskin and L.~Knizhnerman.
\newblock Two polynomial methods of calculating functions of symmetric
  matrices.
\newblock {\em U.S.S.R.\ Comput.\ Math.\ Math.\ Phys.}, 29(6):112--121, 1989.

\bibitem{VanDenEshofFrommerLippertSchillingVanDerVorst2002}
J.~{\noopsort{Eshof}van den Eshof}, A.~Frommer, {\relax Th}.~Lippert,
  K.~Schilling, and H.~A. van~der Vorst.
\newblock Numerical methods for the {QCD} overlap operator. {I}.
  {S}ign-function and error bounds.
\newblock {\em Comput.\ Phys.\ Commun.}, 146(2):203--224, 2002.

\bibitem{EstradaRodriguezVelaszquez2005}
E.~Estrada and J.~A. Rodriguez-Velazquez.
\newblock Subgraph centrality in complex networks.
\newblock {\em Phys.\ Rev.\ E}, 71(5):056103, 2005.

\bibitem{FrommerGuettelSchweitzer2014b}
A.~Frommer, S.~G\"uttel, and M.~Schweitzer.
\newblock Convergence of restarted {K}rylov subspace methods for {S}tieltjes
  functions of matrices.
\newblock {\em SIAM J.\ Matrix Anal.\ Appl.}, 35(4):1602--1624, 2014.

\bibitem{FrommerGuettelSchweitzer2014a}
A.~Frommer, S.~G{\"u}ttel, and M.~Schweitzer.
\newblock Efficient and stable {A}rnoldi restarts for matrix functions based on
  quadrature.
\newblock {\em SIAM J.\ Matrix Anal.\ Appl.}, 35:661--683, 2014.

\bibitem{FrommerSchweitzer2015}
A.~Frommer and M.~Schweitzer.
\newblock Error bounds and estimates for {K}rylov subspace approximations of
  {S}tieltjes matrix functions.
\newblock {\em BIT}, 56:865--892, 2016.

\bibitem{GarrappaPopolizio2018}
R.~Garrappa and M.~Popolizio.
\newblock Computing the matrix {M}ittag-{L}effler function with applications to
  fractional calculus.
\newblock {\em J.\ Sci.\ Comput.}, 77(1):129--153, 2018.

\bibitem{GuettelSchweitzer2021}
S.~Güttel and M.~Schweitzer.
\newblock A comparison of limited-memory {K}rylov methods for {S}tieltjes
  functions of {H}ermitian matrices.
\newblock {\em SIAM J.\ Matrix Anal.\ Appl.}, 42(1):83--107, 2021.

\bibitem{GuettelKnizhnerman2013}
S.~G\"{u}ttel and L.~Knizhnerman.
\newblock A black-box rational {A}rnoldi variant for {C}auchy--{S}tieltjes
  matrix functions.
\newblock {\em BIT}, 53(3):595--616, 2013.

\bibitem{HestenesStiefel1952}
M.~R. Hestenes and E.~Stiefel.
\newblock Methods of conjugate gradients for solving linear systems.
\newblock {\em J.\ Res.\ Natl.\ Bur.\ Stand.}, 49:409--436, 1952.

\bibitem{HochbruckOstermann2010}
M.~Hochbruck and A.~Ostermann.
\newblock Exponential integrators.
\newblock {\em Acta Numerica}, 19:209--286, 4 2010.

\bibitem{hofreither2021algorithm}
C.~Hofreither.
\newblock An algorithm for best rational approximation based on barycentric
  rational interpolation.
\newblock {\em Numer.\ Algorithms}, 88(1):365--388, 2021.

\bibitem{Lanczos1950}
C.~Lanczos.
\newblock An iteration method for the solution of the eigenvalue problem of
  linear differential and integral operators.
\newblock {\em J.\ Res.\ Natl.\ Stand.}, 45:255--282, 1950.

\bibitem{MasseiRobol2020}
S.~Massei and L.~Robol.
\newblock Rational {K}rylov for {S}tieltjes matrix functions: convergence and
  pole selection.
\newblock {\em BIT}, 61:237--273, 2021.

\bibitem{Neuberger1998}
H.~Neuberger.
\newblock Exactly massless quarks on the lattice.
\newblock {\em Phys.\ Lett., B}, 417(1--2):141--144, 1998.

\bibitem{pleiss2020fast}
G.~Pleiss, M.~Jankowiak, D.~Eriksson, A.~Damle, and J.~Gardner.
\newblock Fast matrix square roots with applications to {G}aussian processes
  and bayesian optimization.
\newblock {\em Advances in neural information processing systems},
  33:22268--22281, 2020.

\bibitem{Saad1992}
Y.~Saad.
\newblock Analysis of some {K}rylov subspace approximations to the matrix
  exponential operator.
\newblock {\em SIAM J.\ Numer.\ Anal.}, 29(1):209--228, February 1992.

\bibitem{SchillingSongVondracek2012}
R.~L. Schilling, R.~Song, and Z.~Vondracek.
\newblock {\em Bernstein Functions -- Theory and Applications}.
\newblock De Gruyter, Berlin, Boston, 2012.

\bibitem{Schweitzer2016thesis}
M.~Schweitzer.
\newblock {\em Restarting and error estimation in polynomial and extended
  {K}rylov subspace methods for the approximation of matrix functions}.
\newblock {Ph.D. thesis}, Bergische Universit{\"a}t Wuppertal, 2016.

\bibitem{Stoll2020}
M.~Stoll.
\newblock A literature survey of matrix methods for data science.
\newblock {\em GAMM-Mitteilungen}, 43(3):e202000013, 2020.

\bibitem{Woodbury1950}
M.~A. Woodbury.
\newblock {\em Inverting modified matrices}.
\newblock Department of Statistics, Princeton University, 1950.

\end{thebibliography}
\bibliographystyle{abbrv}

\end{document}